\providecommand{\U}[1]{\protect\rule{.1in}{.1in}}
\newtheorem{theorem}{Theorem}
\newtheorem{acknowledgement}[theorem]{Acknowledgement}
\newtheorem{example}[theorem]{Example}
\newtheorem{lemma}[theorem]{Lemma}
\theoremstyle{definition}
\numberwithin{equation}{section}
\newcommand{\resumename}{R\'esum\'e}
\begin{document}
\date{\today}
\title[Bandlimited Wavelets]{Bandlimited Wavelets on the Heisenberg Group}
\author[V. Oussa]{Vignon S. Oussa}
\address{Dept.\ of Mathematics\\
Bridgewater State University\\
Bridgewater, MA 02324 U.S.A.\\}
\email{Vignon.Oussa@bridgew.edu}
\maketitle

\begin{abstract}
Let $\mathbb{H}$ be the three-dimensional Heisenberg group. We introduce a
structure on the Heisenberg group which consists of the biregular
representation of $\mathbb{H\times H}$ restricted to some discrete subset of
$\mathbb{H\times H}$ and a free group of automorphisms $H$ singly generated
and acting semi-simply on $\mathbb{H}$. Using well-known theorems borrowed
from Gabor theory, we are able to construct simple and computable bandlimited
discrete wavelets on the Heisenberg group. Moreover, we provide some necessary
and sufficient conditions for the existence of these wavelets.

\end{abstract}




\section{Introduction}

A wavelet frame is a system generated by the action of translation and
dilation of a single function. More precisely, if $\psi\in L^{2}\left(
\mathbb{R}\right)  $ and $a,b$ are some fixed positive numbers and if
\begin{equation}
\mathcal{W}\left(  \psi,a,b\right)  =\left\{  a^{n/2}\psi\left(
a^{n}x-bk\right)  :k,n\in\mathbb{Z}\right\}  \label{affine}%
\end{equation}
is a frame (orthonormal basis) for $L^{2}\left(  \mathbb{R}\right)  $ then we
call $\mathcal{W}\left(  \psi,a,b\right)  $ a wavelet frame (orthonormal
basis). This system provides expansions for functions in $L^{2}\left(
\mathbb{R}\right)  .$ For example, in the case where $\mathcal{W}\left(
\psi,a,b\right)  $ is a Parseval frame or an orthonormal basis, then it is
known that for any function $\phi\in L^{2}\left(
\mathbb{R}
\right)  ,$
\[
\phi=\sum_{k,n\in%
\mathbb{Z}
}\left\langle \phi,a^{n/2}\psi\left(  a^{n}\cdot-bk\right)  \right\rangle
a^{n/2}\psi\left(  a^{n}\cdot-bk\right)  .
\]
Wavelet theory does extend to commutative groups of higher dimensions and even
to some non-commutative locally compact groups. In fact, the existence of
wavelets has been proved on the Heisenberg group and on other type of
stratified nilpotent Lie groups (see \cite{Mayeli} \cite{oussav},
\cite{Jawerth}, \cite{Hept} and \cite{Yang}). In \cite{Hept}, the authors
developed a theory of multiresolution analysis, by applying a concept of
acceptable dilations on the Heisenberg group which was used in the commutative
case by Gr\"{o}chenig and Madych in \cite{Grog}. As a result, they were able
to provide a description of Haar wavelets on the Heisenberg group. In
\cite{Mayeli}, Mayeli used multiresolution-like analysis and some sampling
theorems for the Heisenberg group obtained by F\"{u}hr in \cite{Fuhr cont} to
prove the existence of Shannon-type wavelets generated by discrete
translations and dilations on the Hilbert space of all square-integrable
functions on the Heisenberg group. However, there are several complications
which make this approach of construction of wavelets on the Heisenberg group
difficult. Let us be more precise. Since the Fourier and Plancherel transforms
are the central tools employed in the construction of wavelets, the
non-commutative nature of the Heisenberg group represents a major obstruction.
In fact, the left regular representation of the Heisenberg group is decomposed
via the Plancherel transform into a direct integral of Schr\"{o}dinger
representations, each occurring with infinite multiplicities. It turns out
that the structure which consists of the left regular representation
restricted to some discrete subgroup together with the usual expansive
dilation of the Heisenberg group is not a natural structure for the
construction of Shannon-type wavelets on the Heisenberg group. Therefore, in
order to construct simple and computable wavelets on the Heisenberg group,
there is a need to seek a different approach.

In the present work, we introduce a new construction of discrete bandlimited
wavelets on the Heisenberg group generated by a single function. The
techniques developed in this work are quite different from the ones known for
commutative groups and the ones employed by Mayeli in \cite{Mayeli}. The
structure considered in the present work consists of a pair $\left(
\tau,D\right)  $ where $D$ is a unitary representation induced by an
automorphism of the Heisenberg group, and $\tau$ is the biregular
representation of the Heisenberg group. The advantage of this new approach is
that, using available facts borrowed from Gabor theory, we are able to obtain
simple and computable discrete bandlimited wavelets (Shannon-like) on the
Heisenberg group.

Let us summarize our main result. Let $\mathbb{H}_{0}$ be the $3$-dimensional
Heisenberg group with Lie algebra $\mathfrak{h}$ spanned by $Z,Y,X$ such that
the only non-trivial Lie brackets are $\left[  X,Y\right]  =Z.$ We may think
of the Heisenberg group as being isomorphic to the non-commutative group
$\left(
\mathbb{R}
^{3},\ast\right)  $ with group law defined by
\[
\left(  x,y,z\right)  \ast\left(  w,v,u\right)  =\left(
w+x,v+y,u+z+vx\right)  .
\]
We introduce a convenient faithful finite-dimensional representation of the
Heisenberg group. Let us define an injective homomorphism $\rho:\mathbb{H}%
_{0}\rightarrow GL\left(  4,%
\mathbb{R}
\right)  $ such that
\begin{align*}
\rho\left(  \exp\left(  zZ\right)  \exp\left(  yY\right)  \exp\left(
xX\right)  \right)   &  =\left[
\begin{array}
[c]{cccc}%
1 & 0 & 0 & z\\
0 & 1 & 0 & 0\\
0 & 0 & 1 & 0\\
0 & 0 & 0 & 1
\end{array}
\right]  \left[
\begin{array}
[c]{cccc}%
1 & 0 & -y & 0\\
0 & 1 & 0 & y\\
0 & 0 & 1 & 0\\
0 & 0 & 0 & 1
\end{array}
\right]  \left[
\begin{array}
[c]{cccc}%
1 & x & 0 & 0\\
0 & 1 & 0 & 0\\
0 & 0 & 1 & 0\\
0 & 0 & 0 & 1
\end{array}
\right]  \\
&  =\left[
\begin{array}
[c]{cccc}%
1 & x & -y & z\\
0 & 1 & 0 & y\\
0 & 0 & 1 & 0\\
0 & 0 & 0 & 1
\end{array}
\right]  .
\end{align*}
Since $\rho$ has trivial kernel, then $\mathbb{H}=\rho\left(  \mathbb{H}%
_{0}\right)  $ is isomorphic to $\mathbb{H}_{0}.$ Next, we endow the group
$\mathbb{H}$ with its canonical Haar measure which is just like the Lebesgue
measure on $%
\mathbb{R}
^{3}$. Also, it is easy to see that the center of the Heisenberg group is
\[
Z\left(  \mathbb{H}\right)  =\left\{  \left[
\begin{array}
[c]{cccc}%
1 & 0 & 0 & z\\
0 & 1 & 0 & 0\\
0 & 0 & 1 & 0\\
0 & 0 & 0 & 1
\end{array}
\right]  :z\in%
\mathbb{R}
\right\}  .
\]
Finally, we define a discrete subgroup $\Gamma$ of the Heisenberg group as
follows:
\[
\Gamma=\left\{  \left[
\begin{array}
[c]{cccc}%
1 & k_{3} & -k_{2} & k_{1}\\
0 & 1 & 0 & k_{2}\\
0 & 0 & 1 & 0\\
0 & 0 & 0 & 1
\end{array}
\right]  :k_{1},k_{2},k_{3}\in%
\mathbb{Z}
\right\}  .
\]
Put%
\[
\Lambda=\left\{  \left(  \left[
\begin{array}
[c]{cccc}%
1 & k_{3} & -k_{2} & k_{1}\\
0 & 1 & 0 & k_{2}\\
0 & 0 & 1 & 0\\
0 & 0 & 0 & 1
\end{array}
\right]  ,\left[
\begin{array}
[c]{cccc}%
1 & m_{3} & -m_{2} & 0\\
0 & 1 & 0 & m_{2}\\
0 & 0 & 1 & 0\\
0 & 0 & 0 & 1
\end{array}
\right]  \right)  :\left[
\begin{array}
[c]{c}%
k_{1}\\
k_{2}\\
k_{3}\\
m_{2}\\
m_{3}%
\end{array}
\right]  \in%
\mathbb{Z}
^{5}\right\}  \subset\Gamma\times\Gamma
\]
and let
\[
A=\left[
\begin{array}
[c]{cccc}%
2 & 0 & 0 & 0\\
0 & b & 0 & 0\\
0 & 0 & a & 0\\
0 & 0 & 0 & 1
\end{array}
\right]  \in GL\left(  4,%
\mathbb{R}
\right)
\]
such that $ab=2.$ Then the map
\[
M\mapsto AMA^{-1}%
\]
defines an outer automorphism on the Heisenberg group. Now, let $\tau
:\mathbb{H\times H}\rightarrow\mathcal{U}\left(  L^{2}\left(  \mathbb{H}%
\right)  \right)  $ such that $\tau\left(  u,v\right)  f=L\left(  u\right)
R\left(  v\right)  f$ where
\[
L\left(  u\right)  f\left(  x\right)  =f\left(  u^{-1}x\right)  \text{ and
}R\left(  v\right)  f\left(  x\right)  =f\left(  xv\right)  .
\]
Clearly $\tau$ is the biregular representation of the Heisenberg group. Next,
define a representation $D$ of the group generated by $A$ such that
$D:\left\langle A\right\rangle \rightarrow\mathcal{U}\left(  L^{2}\left(
\mathbb{H}\right)  \right)  $ and
\[
D_{A^{m}}f\left(  n\right)  =\left\vert \delta\left(  A\right)  \right\vert
^{-m/2}f\left(  A^{-m}nA^{m}\right)
\]
where
\[
d\left(  A^{m}nA^{-m}\right)  =\left\vert \delta\left(  A\right)  \right\vert
^{m}dn
\]
and $dn$ is the canonical Haar measure on the Heisenberg group.

The main objective of the present paper is to prove the existence, and to find
characteristics of functions $f$ in $L^{2}\left(  \mathbb{H}\right)  $ such
that
\[
\left\{  D_{A^{m}}\tau\left(  \gamma,\eta\right)  f:m\in%
\mathbb{Z}
,\left(  \gamma,\eta\right)  \in\Lambda\right\}
\]
is a Parseval frame in $L^{2}\left(  \mathbb{H}\right)  .$ That is, given
$h\in L^{2}\left(  \mathbb{H}\right)  ,$%
\[
\sum_{m\in%
\mathbb{Z}
}\sum_{\left(  \gamma,\eta\right)  \in\Lambda}\left\vert \left\langle
h,D_{A^{m}}\tau\left(  \gamma,\eta\right)  f\right\rangle \right\vert
^{2}=\left\Vert h\right\Vert _{L^{2}\left(  \mathbb{H}\right)  }^{2}.
\]
Furthermore%
\[
h=\sum_{m\in%
\mathbb{Z}
}\sum_{\left(  \gamma,\eta\right)  \in\Lambda}\left\langle h,D_{A^{m}}%
\tau\left(  \gamma,\eta\right)  f\right\rangle D_{A^{m}}\tau\left(
\gamma,\eta\right)  f
\]
with convergence in the $L^{2}$-norm.

We recall that the Plancherel transform (see the section titled
Preliminaries)
\[
\mathbf{P}:L^{2}\left(  \mathbb{H}\right)  \rightarrow\int_{%
\mathbb{R}
^{\ast}}^{\oplus}L^{2}\left(
\mathbb{R}
\right)  \otimes L^{2}\left(
\mathbb{R}
\right)  \left\vert \lambda\right\vert d\lambda
\]
is a unitary operator obtained by extending the group Fourier transform from
$L^{2}\left(  \mathbb{H}\right)  \cap L^{1}\left(  \mathbb{H}\right)  $ to
$L^{2}\left(  \mathbb{H}\right)  .$ Let
\[
\mathbf{H}_{\mathcal{S}}=\mathbf{P}^{-1}\left(  \int_{\mathcal{S}}^{\oplus
}L^{2}\left(
\mathbb{R}
\right)  \otimes L^{2}\left(
\mathbb{R}
\right)  \left\vert \lambda\right\vert d\lambda\right)
\]
such that $\mathcal{S}$ a subset of $%
\mathbb{R}
^{\ast}.$ Then $\mathbf{H}_{\mathcal{S}}$ is a $\tau$-invariant Hilbert
subspace of $L^{2}\left(  \mathbb{H}\right)  $. Here are the main theorems of
this paper which are proved in the third section of this paper.

\begin{theorem}
\label{main}If $\mathcal{S}$ is translation congruent to $\left(  0,1\right]
$ and $\mathcal{S}\subseteq\left[  -1,1\right]  \ $then there is a function
$f\in\mathbf{H}_{\mathcal{S}}$ such that $\tau\left(  \Lambda\right)  f$ is a
Parseval frame in $\mathbf{H}_{\mathcal{S}}$ and
\[
\left\Vert f\right\Vert _{L^{2}\left(  \mathbb{H}\right)  }^{2}\leq\frac{2}%
{3}.
\]

\end{theorem}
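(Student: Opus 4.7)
The plan is to pass to the Plancherel transform side, separate the central-character sum over the parameter $k_1$ by a Fourier series argument on $\mathcal{S}$, and then choose $f$ so that on each Plancherel fiber the remaining system becomes a painless Gabor expansion on $L^2(\mathbb{R}^2)$.

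Using the identification $L^2(\mathbb{R})\otimes L^2(\mathbb{R})\cong L^2(\mathbb{R}^2)$ through integral kernels of Hilbert--Schmidt operators, I would set $F(\lambda)=\mathbf{P}f(\lambda)$ and $H(\lambda)=\mathbf{P}h(\lambda)$. Since the Plancherel transform intertwines $L(u)$ with left multiplication by the Schr\"odinger operator $\pi_\lambda(u)$ and $R(v)$ with right multiplication by $\pi_\lambda(v)^*$, we have $\mathbf{P}(\tau(\gamma,\eta)f)(\lambda)=\pi_\lambda(\gamma)F(\lambda)\pi_\lambda(\eta)^*$. Writing the central coordinate of $\gamma$ as $k_1$ and its non-central part as $\gamma_0$, the scalar $e^{2\pi i\lambda k_1}$ factors out of $\pi_\lambda(\gamma)$, and a direct kernel computation shows that $\pi_\lambda(\gamma_0)F(\lambda)\pi_\lambda(\eta)^*$ has kernel
\[
e^{2\pi i\lambda k_2 s}\,e^{-2\pi i\lambda m_2 t}\,K_{F(\lambda)}(s+k_3,t+m_3),
\]
up to a scalar of modulus one.

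Next I would sum over $k_1\in\mathbb{Z}$ first. For each fixed $(k_2,k_3,m_2,m_3)$, the inner product $\langle h,\tau(\gamma,\eta)f\rangle$ is the $k_1$-th Fourier coefficient of $\lambda\mapsto|\lambda|\,\langle H(\lambda),\pi_\lambda(\gamma_0)F(\lambda)\pi_\lambda(\eta)^*\rangle$ against $e^{2\pi i\lambda k_1}$. The hypothesis that $\mathcal{S}$ is translation congruent to $(0,1]$ is precisely what forces $\{e^{2\pi i\lambda k_1}\}_{k_1\in\mathbb{Z}}$ to be an orthonormal basis of $L^2(\mathcal{S},d\lambda)$, since the integer shifts identifying $\mathcal{S}$ with $(0,1]$ leave these exponentials invariant. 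Parseval for this Fourier series collapses the $k_1$-sum to
\[
\int_\mathcal{S}\bigl|\langle H(\lambda),\pi_\lambda(\gamma_0)F(\lambda)\pi_\lambda(\eta)^*\rangle\bigr|^2\,\lambda^2\,d\lambda.
\]

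For each $\lambda\in\mathcal{S}$ I would then take $K_{F(\lambda)}(s,t)=\sqrt{|\lambda|}\,\chi_{[0,1]^2}(s,t)$. Because $|\lambda|\leq 1$, the integer translates of $[0,1]^2$ tile $\mathbb{R}^2$ with disjoint interiors, so the $(k_3,m_3)$-sum becomes the tiling sum, while on each translated cell the modulations $\{e^{-2\pi i\lambda k_2 s}e^{2\pi i\lambda m_2 t}\}_{k_2,m_2\in\mathbb{Z}}$ form a rescaled Fourier subsystem of $L^2([0,1/|\lambda|]^2)$ restricted to $[0,1]^2$. A routine painless-expansion calculation then yields
\[
\sum_{k_2,k_3,m_2,m_3}\bigl|\langle H(\lambda),\pi_\lambda(\gamma_0)F(\lambda)\pi_\lambda(\eta)^*\rangle\bigr|^2=\frac{1}{|\lambda|}\|H(\lambda)\|^2,
\]
and combining with the $\lambda^2$ weight from the previous step and integrating over $\mathcal{S}$ gives $\sum_{(\gamma,\eta)\in\Lambda}|\langle h,\tau(\gamma,\eta)f\rangle|^2=\int_\mathcal{S}\|H(\lambda)\|^2|\lambda|\,d\lambda=\|h\|^2_{L^2(\mathbb{H})}$, which is the Parseval property. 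The norm bound is then immediate from $\|F(\lambda)\|^2=|\lambda|$:
\[
\|f\|^2=\int_\mathcal{S}\|F(\lambda)\|^2|\lambda|\,d\lambda=\int_\mathcal{S}\lambda^2\,d\lambda\leq\int_{-1}^{1}\lambda^2\,d\lambda=\tfrac{2}{3}.
\]
The main obstacle will be the bookkeeping: aligning the Schr\"odinger formulas with the matrix parametrization of $\Gamma$ so that $(k_2,k_3)$ and $(m_2,m_3)$ give two independent time-frequency shifts on the two tensor factors, and making precise use of the translation-congruence identification of $\mathcal{S}$ with $(0,1]$ inside the Fourier-series step.
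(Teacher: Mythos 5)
Your proposal is correct and its overall skeleton coincides with the paper's: pass to the Plancherel transform, split off the central parameter $k_{1}$, use the translation congruence of $\mathcal{S}$ with $\left(0,1\right]$ to make $\left\{ e^{2\pi i\lambda k_{1}}\chi_{\mathcal{S}}\right\}_{k_{1}\in\mathbb{Z}}$ an orthonormal basis of $L^{2}\left(\mathcal{S}\right)$, collapse the $k_{1}$-sum by Parseval, and reduce to a fiber-wise Gabor Parseval identity on each Plancherel fiber, with the hypothesis $\mathcal{S}\subseteq\left[-1,1\right]$ entering exactly where you put it, namely to guarantee that the time--frequency lattice $\mathbb{Z}\times\lambda\mathbb{Z}$ has volume $\left\vert\lambda\right\vert\leq1$. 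Where you diverge is in how the fiber-wise step is handled. The paper keeps $\mathbf{P}f(\lambda)=u_{\lambda}\otimes v_{\lambda}$ abstract and assembles the Parseval identity from three cited ingredients: the density theorem (Lemma 2.2) to get existence of Parseval Gabor windows $\left\vert\lambda\right\vert^{1/4}u_{\lambda}$ and $\left\vert\lambda\right\vert^{1/4}\overline{v}_{\lambda}$, a separate lemma (Lemma 3.1) to transfer the Gabor property through the contragredient representation $\overline{\pi}_{\lambda}$, and the tensor-product-of-Parseval-frames lemma (Lemma 2.4). You instead realize the Hilbert--Schmidt operators as integral kernels, choose the explicit window $K_{F(\lambda)}=\sqrt{\left\vert\lambda\right\vert}\,\chi_{[0,1]^{2}}$ (which is exactly the paper's own Example, $u_{\lambda}=v_{\lambda}=\left\vert\lambda\right\vert^{1/4}\chi_{[0,1)}$), and verify the frame identity by a painless-expansion computation. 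Your route is more elementary and genuinely constructive for the existence statement; the paper's route proves the identity for an entire family of admissible windows, which is what it later needs for Theorem 3 (Theorem \ref{last}). Your normalizations all check out ($\left\Vert F(\lambda)\right\Vert_{\mathcal{HS}}^{2}=\left\vert\lambda\right\vert$, hence $\left\Vert f\right\Vert^{2}=\int_{\mathcal{S}}\lambda^{2}\,d\lambda\leq2/3$); the only small imprecision is attributing the role of $\left\vert\lambda\right\vert\leq1$ to the tiling by integer translates of $[0,1]^{2}$ (which holds regardless of $\lambda$) rather than to the orthogonality of the $\lambda\mathbb{Z}^{2}$-modulations over the unit cell, but your next sentence about the rescaled Fourier system on $[0,1/\left\vert\lambda\right\vert]^{2}$ shows you are using the correct mechanism. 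The remaining work you flag --- aligning the Schr\"odinger formulas with the matrix parametrization so that $(k_{2},k_{3})$ and $(m_{2},m_{3})$ act independently on the two kernel variables --- is indeed just bookkeeping, since the unimodular scalars arising from the ordering $\exp(k_{1}Z)\exp(k_{2}Y)\exp(k_{3}X)$ do not affect the Parseval property.
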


Put
\[
\Lambda_{1}=\left\{  \left(  \left[
\begin{array}
[c]{cccc}%
1 & k_{3} & -k_{2} & 0\\
0 & 1 & 0 & k_{2}\\
0 & 0 & 1 & 0\\
0 & 0 & 0 & 1
\end{array}
\right]  ,\left[
\begin{array}
[c]{cccc}%
1 & m_{3} & -m_{2} & 0\\
0 & 1 & 0 & m_{2}\\
0 & 0 & 1 & 0\\
0 & 0 & 0 & 1
\end{array}
\right]  \right)  :k_{i},m_{j}\in\mathbb{Z}\right\}  .
\]
For a given representation $\pi,$ let $\overline{\pi}$ be the corresponding
contragredient representation. Let
\[
\left\{  \pi_{\lambda}:\lambda\in\mathbb{R},\lambda\neq0\right\}
\]
be a parametrizing set for the unitary dual of the Heisenberg group and let
$d\lambda$ be the Lebesgue measure on $%
\mathbb{R}
.$

\begin{theorem}
\label{new}Assume that $\mathcal{S}$ is translation congruent to $\left(
0,1\right]  .$ Let $f\in\mathbf{H}_{\mathcal{S}}.$ If $\tau\left(
\Lambda\right)  f$ is a Parseval frame in $\mathbf{H}_{\mathcal{S}}$\textbf{
}then%
\[
\left\{  \left[  \pi_{\lambda}\left(  \kappa\right)  \otimes\overline{\pi
}_{\lambda}\left(  \eta\right)  \right]  \left(  \mathbf{P}f\right)
(\lambda)\left\vert \lambda\right\vert ^{1/2}:\left(  \kappa,\eta\right)
\in\Lambda_{1}\right\}
\]
is a Parseval frame in $L^{2}\left(
\mathbb{R}
\right)  \otimes L^{2}\left(
\mathbb{R}
\right)  $ for $d\lambda$-almost every $\lambda\in\mathcal{S}$.
\end{theorem}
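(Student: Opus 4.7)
The plan is to pull the Parseval frame identity on $\mathbf{H}_\mathcal{S}$ through the Plancherel transform, use the center/quotient factorization built into $\Lambda$ to convert the sum over the central variable into Parseval's identity for Fourier series on $(0,1]$, and then extract the pointwise frame condition by a direct-integral polarization argument.

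First, I factor every $\gamma \in \Gamma$ occurring as a first coordinate of $\Lambda$ as $\gamma = z(k_1)\kappa$, where $z(k_1)$ denotes the central integer element with $(1,4)$-entry equal to $k_1$ and $\kappa$ is the element of $\Gamma$ with vanishing $(1,4)$-entry. Each element of $\Lambda$ then has the unique form $(z(k_1)\kappa,\eta)$ with $k_1 \in \mathbb{Z}$ and $(\kappa,\eta) \in \Lambda_1$. Since $\pi_\lambda$ is the character $e^{2\pi i\lambda k_1}$ on $z(k_1)$, the standard intertwining of $\tau$ through $\mathbf{P}$ gives
\[
\mathbf{P}\bigl(\tau(z(k_1)\kappa,\eta)f\bigr)(\lambda) = e^{2\pi i\lambda k_1}\,\bigl[\pi_\lambda(\kappa) \otimes \overline{\pi}_\lambda(\eta)\bigr](\mathbf{P}f)(\lambda).
\]
For fixed $h \in \mathbf{H}_\mathcal{S}$ and $(\kappa,\eta) \in \Lambda_1$, set
\[
F_{\kappa,\eta,h}(\lambda) = |\lambda|\,\bigl\langle \mathbf{P}h(\lambda),\,[\pi_\lambda(\kappa) \otimes \overline{\pi}_\lambda(\eta)](\mathbf{P}f)(\lambda)\bigr\rangle;
\]
Plancherel then yields $\langle h,\tau(z(k_1)\kappa,\eta)f\rangle = \int_\mathcal{S} e^{-2\pi i\lambda k_1} F_{\kappa,\eta,h}(\lambda)\,d\lambda$.

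Second, because $\mathcal{S}$ is translation congruent to $(0,1]$, the map $\lambda \mapsto \lambda \bmod 1$ is an a.e.\ measure-preserving bijection $\mathcal{S} \to (0,1]$ leaving $e^{-2\pi i\lambda k_1}$ invariant, so Parseval for Fourier series on the torus yields
\[
\sum_{k_1 \in \mathbb{Z}} \Bigl|\langle h,\tau(z(k_1)\kappa,\eta)f\rangle\Bigr|^2 = \int_\mathcal{S} |F_{\kappa,\eta,h}(\lambda)|^2\,d\lambda.
\]
Summing over $(\kappa,\eta) \in \Lambda_1$, using Tonelli, splitting $|\lambda|^2 = |\lambda| \cdot |\lambda|$ so that one factor $|\lambda|^{1/2}$ is absorbed into the frame element and the remaining $|\lambda|$ serves as the Plancherel density, and combining with the Parseval hypothesis $\sum_\Lambda |\langle h,\tau(\cdot)f\rangle|^2 = \|h\|^2 = \int_\mathcal{S} \|\mathbf{P}h(\lambda)\|^2 |\lambda|\,d\lambda$, I obtain the integrated identity
\[
\int_\mathcal{S} \left[\,\sum_{(\kappa,\eta) \in \Lambda_1} \Bigl|\bigl\langle \mathbf{P}h(\lambda),\,|\lambda|^{1/2}[\pi_\lambda(\kappa) \otimes \overline{\pi}_\lambda(\eta)](\mathbf{P}f)(\lambda)\bigr\rangle\Bigr|^2 - \|\mathbf{P}h(\lambda)\|^2\right]|\lambda|\,d\lambda = 0,
\]
valid for every $h \in \mathbf{H}_\mathcal{S}$.

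The bracketed integrand is $\langle (S_\lambda - I)\mathbf{P}h(\lambda),\mathbf{P}h(\lambda)\rangle$, where $S_\lambda$ is the positive frame operator on $L^2(\mathbb{R}) \otimes L^2(\mathbb{R})$ built from the candidate system at $\lambda$. Since the sections $\lambda \mapsto \mathbf{P}h(\lambda)$ with $h$ ranging over $\mathbf{H}_\mathcal{S}$ realize every square-integrable measurable section of $\int^\oplus_\mathcal{S} L^2(\mathbb{R}) \otimes L^2(\mathbb{R})\,|\lambda|\,d\lambda$, a standard polarization and selection argument forces $S_\lambda = I$ for a.e.\ $\lambda \in \mathcal{S}$, which is exactly the asserted fiberwise Parseval frame property. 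The main obstacle I foresee is precisely this final step: one must verify measurability of $\lambda \mapsto S_\lambda$ (which follows from strong convergence of its defining series against the measurable fields $\pi_\lambda(\kappa) \otimes \overline{\pi}_\lambda(\eta)$) and construct enough test sections, localized on sets of finite Plancherel measure, to separate points fiberwise. The remainder of the argument is bookkeeping around Plancherel, the quotient structure of $\Lambda$ relative to $\Lambda_1$, and the translation-congruence hypothesis.
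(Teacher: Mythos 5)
Your proposal is correct and follows essentially the same route as the paper: factor $\Lambda$ through the central variable $k_1$, use translation congruence of $\mathcal{S}$ to $(0,1]$ to apply Parseval for Fourier series and obtain the integrated identity, then localize (the paper multiplies $\mathbf{P}h$ by $\chi_B$ over arbitrary Borel $B$ and runs over a countable dense family of sections; your ``polarization and selection'' with the fiberwise frame operator $S_\lambda$ is the same argument in operator form). The only detail both treatments leave implicit is passing from the Parseval identity on a dense set of $T$ to all of $L^2(\mathbb{R})\otimes L^2(\mathbb{R})$, which is standard via finite partial sums and continuity.
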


Let $a,b$ be non-zero real numbers. Let $f\in L^{2}\left(  \mathbb{R}\right)
$. The family of vectors
\[
\mathcal{G}\left(  f,\mathcal{L}\right)  =\left\{  e^{2\pi i\left\langle
k,x\right\rangle }f\left(  x-n\right)  :k\in b\mathbb{Z},n\in a\mathbb{Z}%
\right\}
\]
is called a Gabor system in $L^{2}\left(  \mathbb{R}\right)  .$ For any
function $f\in L^{2}\left(  \mathbb{R}\right)  ,$ let $\overline{f}$ be the
complex conjugate of $f.$

\begin{theorem}
\label{last}Assume that $\mathcal{S}$ is dilation congruent to $\left[
-1,-1/2\right)  \cup\left(  1/2,1\right]  $, translation congruent to $\left(
0,1\right]  $ and that $\mathcal{S}\subseteq\left[  -1,1\right]  .$ Let
$f\in\mathbf{H}_{\mathcal{S}}$ be defined as follows: $\mathbf{P}f\left(
\lambda\right)  =u_{\lambda}\otimes v_{\lambda}\ $such that $\mathcal{G}%
\left(  \left\vert \lambda\right\vert ^{1/4}u_{\lambda},%
\mathbb{Z}
\times\lambda%
\mathbb{Z}
\right)  $ is a Parseval Gabor frame for $d\lambda$-almost every $\lambda
\in\mathcal{S}$, and $\mathcal{G}\left(  \left\vert \lambda\right\vert
^{1/4}\overline{v}_{\lambda},%
\mathbb{Z}
\times\lambda%
\mathbb{Z}
\right)  $ is a Parseval Gabor frame for $d\lambda$-almost every $\lambda
\in\mathcal{S}$. Then, the system
\[
\left\{  D_{A^{m}}\tau\left(  \gamma,\eta\right)  f:m\in%
\mathbb{Z}
,\left(  \gamma,\eta\right)  \in\Lambda\right\}
\]
is a Parseval frame in $L^{2}\left(  \mathbb{H}\right)  .$
\end{theorem}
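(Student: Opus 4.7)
My plan is to build the Parseval frame for $L^{2}(\mathbb{H})$ out of a Parseval frame on the single slice $\mathbf{H}_{\mathcal{S}}$ and then spread it across $L^{2}(\mathbb{H})$ by means of the dilations $D_{A^{m}}$. This reduces Theorem \ref{last} to two essentially independent steps: a fiberwise translation analysis producing a Parseval frame $\tau(\Lambda)f$ for $\mathbf{H}_{\mathcal{S}}$, and an orthogonal direct-sum decomposition of $L^{2}(\mathbb{H})$ into the dilates $\{D_{A^{m}}\mathbf{H}_{\mathcal{S}}\}_{m\in\mathbb{Z}}$.

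For the first step, I would establish a converse to Theorem \ref{new} under the pure-tensor hypothesis $\mathbf{P}f(\lambda)=u_{\lambda}\otimes v_{\lambda}$. Given $h\in\mathbf{H}_{\mathcal{S}}$, I would unwind $\sum_{(\gamma,\eta)\in\Lambda}|\langle h,\tau(\gamma,\eta)f\rangle|^{2}$ via the Plancherel transform into a double integral in $\lambda\in\mathcal{S}$ of fiberwise inner products with $[\pi_{\lambda}(\gamma)\otimes\overline{\pi}_{\lambda}(\eta)]\mathbf{P}f(\lambda)$. The central-coordinate $k_{1}$ in the definition of $\Lambda$ yields a Fourier series in $\lambda$; together with the translation-congruence of $\mathcal{S}$ to $(0,1]$ this integrates out, leaving a fiberwise Parseval identity indexed by $(\kappa,\eta)\in\Lambda_{1}$, which is exactly the necessary condition from Theorem \ref{new}. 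The pure-tensor form of $\mathbf{P}f(\lambda)$ then lets this tensor-product Parseval frame factor into two one-dimensional Parseval Gabor frames on $L^{2}(\mathbb{R})$, with the $|\lambda|^{1/4}$ normalization absorbing the Plancherel weight $|\lambda|$ evenly across the two tensor factors. Both Gabor pieces are then supplied by the hypothesis on $|\lambda|^{1/4}u_{\lambda}$ and $|\lambda|^{1/4}\overline{v}_{\lambda}$.

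For the second step, a direct calculation from $A\exp(zZ)A^{-1}=\exp(2zZ)$ together with Stone--von Neumann yields, for each $\lambda$, a unitary $V_{\lambda}$ on $L^{2}(\mathbb{R})\otimes L^{2}(\mathbb{R})$ satisfying
\[
\mathbf{P}(D_{A}f)(\lambda)=|\delta(A)|^{1/2}\,V_{\lambda}\,\mathbf{P}f(2\lambda),
\]
so $D_{A^{m}}$ maps $\mathbf{H}_{\mathcal{S}}$ unitarily onto $\mathbf{H}_{2^{-m}\mathcal{S}}$. The dilation-congruence of $\mathcal{S}$ to $[-1,-1/2)\cup(1/2,1]$ is precisely the statement that $\{2^{-m}\mathcal{S}:m\in\mathbb{Z}\}$ partitions $\mathbb{R}^{\ast}$ up to a null set, hence
\[
L^{2}(\mathbb{H})=\bigoplus_{m\in\mathbb{Z}} D_{A^{m}}\mathbf{H}_{\mathcal{S}}
\]
as an orthogonal direct sum. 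Applying the unitary $D_{A^{m}}$ to the Parseval frame $\tau(\Lambda)f$ for $\mathbf{H}_{\mathcal{S}}$ produces a Parseval frame for $D_{A^{m}}\mathbf{H}_{\mathcal{S}}$, and the union of Parseval frames across an orthogonal decomposition is a Parseval frame for the whole space, giving the claim.

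The principal obstacle will be the first step: Theorem \ref{new} is stated only as a necessary condition, so I must verify that every step in the Plancherel reduction is reversible and that the matching between the lattice $\Lambda$ and the Gabor lattice $\mathbb{Z}\times\lambda\mathbb{Z}$ is uniform in $\lambda\in\mathcal{S}$ with enough measurability to justify Fubini. The dilation step, by contrast, is essentially a bookkeeping exercise once the Plancherel action of $D_{A}$ has been pinned down.
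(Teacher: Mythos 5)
Your proposal is correct and follows essentially the same route as the paper: the slice result you describe as a ``converse to Theorem \ref{new}'' is exactly the content of the paper's proof of Theorem \ref{main} (which establishes the fiberwise Gabor-factorization argument for pure tensors, with the $|\lambda|^{1/4}$ normalization and the density condition $\mathrm{vol}(\mathbb{Z}\times\lambda\mathbb{Z})=|\lambda|\leq 1$), and the paper's proof of Theorem \ref{last} then carries out precisely your second step, computing $\mathbf{P}(D_{A^{m}}h)(\lambda)=|\det A|^{m/2}C(A^{m})\,\mathbf{P}h(2^{m}\lambda)\,C(A^{m})^{-1}$ explicitly rather than via Stone--von Neumann and summing the resulting Parseval frames over the orthogonal decomposition $\bigoplus_{m}\mathbf{H}_{2^{-m}\mathcal{S}}$. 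The only cosmetic difference is that you obtain the intertwining unitary abstractly where the paper verifies it by direct calculation.
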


\begin{example}
Let us suppose that $a=b=\sqrt{2}$ so that
\[
A=\left[
\begin{array}
[c]{cccc}%
2 & 0 & 0 & 0\\
0 & \sqrt{2} & 0 & 0\\
0 & 0 & \sqrt{2} & 0\\
0 & 0 & 0 & 1
\end{array}
\right]  .
\]
Put
\[
\mathcal{S}=\left[  -1,-\frac{1}{2}\right)  \cup\left(  \frac{1}{2},1\right]
.
\]
Then $\mathcal{S}$ is up to a null set translation congruent to $\left(
0,1\right]  .$ Define $f$ such that
\[
\mathbf{P}f(\lambda)=\left\vert \lambda\right\vert ^{1/4}\chi_{\left[
0,1\right)  }\otimes\left\vert \lambda\right\vert ^{1/4}\chi_{\left[
0,1\right)  }%
\]
where $\chi_{\left[  0,1\right)  }$ is the characteristic function of the set
$\left[  0,1\right)  .$ According to Proposition $3.1,$ \cite{Pfander} it is
not hard to check that
\[
\mathcal{G}\left(  \left\vert \lambda\right\vert ^{1/2}\chi_{\left[
0,1\right)  },%
\mathbb{Z}
\times\lambda%
\mathbb{Z}
\right)
\]
is Parseval frame in $L^{2}\left(
\mathbb{R}
\right)  $ for every $\lambda\in\mathcal{S}$. Therefore, appealing to Theorem
\ref{main} and Theorem \ref{last}, $\tau\left(  \Lambda\right)  f$ is a
Parseval frame in $\mathbf{H}_{\mathcal{S}}$ and
\[
\left\{  D_{A^{m}}\tau\left(  \gamma,\eta\right)  f:m\in%
\mathbb{Z}
,\left(  \gamma,\eta\right)  \in\Lambda\right\}
\]
is a Parseval frame in $L^{2}\left(  \mathbb{H}\right)  .$ Next, put
\[
M\left(  x,y,z\right)  =\left[
\begin{array}
[c]{cccc}%
1 & x & -y & z\\
0 & 1 & 0 & y\\
0 & 0 & 1 & 0\\
0 & 0 & 0 & 1
\end{array}
\right]  .
\]
Using the group Fourier inverse transform provided in Theorem $4.15$
\cite{Fuhr cont}, we obtain
\[
f\left(  M\left(  x,y,z\right)  \right)  =\int_{\mathcal{S}}\left\langle
\chi_{\left[  0,1\right)  },\pi_{\lambda}\left(  x\right)  \chi_{\left[
0,1\right)  }\right\rangle \left\vert \lambda\right\vert ^{3/2}d\lambda.
\]
Therefore,
\[
f\left(  M\left(  x,y,z\right)  \right)  =\left\{
\begin{array}
[c]{cc}%
\int_{\mathcal{S}}\frac{\left(  \exp\left(  2\pi i\lambda\left(  y-z\right)
\right)  -\exp\left(  2\pi i\lambda\left(  yx-z\right)  \right)  \right)
\left\vert \lambda\right\vert ^{3/2}}{2\pi i\lambda y}d\lambda & \text{ if
}x\in\left[  0,1\right)  \text{ and }y\neq0\\
\frac{\left(  8-\sqrt{2}\right)  \left(  1-x\right)  }{10} & \text{if }%
x\in\left[  0,1\right)  \text{ and }y=0\\
\int_{\mathcal{S}}\frac{\left(  \exp\left(  2\pi i\lambda\left(  y\left(
x+1\right)  -z\right)  \right)  -\exp\left(  -2\pi i\lambda z\right)  \right)
\left\vert \lambda\right\vert ^{3/2}}{2\pi i\lambda y}d\lambda & \text{if
}x\in\left(  -1,0\right]  \text{ and }y\neq0\\
\frac{\left(  8-\sqrt{2}\right)  \left(  1+x\right)  }{10} & \text{if }%
x\in\left(  -1,0\right]  \text{ and }y=0\\
0 & \left\vert x\right\vert \geq1
\end{array}
\right.  .
\]

\end{example}

\section{Preliminaries}

\subsection{Notations and definitions}

The punctured line which is the set of all non-zero real numbers is denoted $%
\mathbb{R}
^{\ast}.$ The general linear group which consists of invertible real matrices
of order $d$ is denoted $GL\left(  d,%
\mathbb{R}
\right)  .$ Let $G$ and $H$ be two groups. If $G$ and $H$ are isomorphic, we
write $G\cong H.$ Let $T$ be a linear operator defined on some Hilbert space.
The adjoint of $T$ is denoted $T^{\ast}.$ All sets of interest in this paper
should be assumed to be measurable, and we shall identify subsets whose
symmetric difference has Lebesgue measure zero. For example, we make no
distinction between $\left[  0,1\right]  $ and $\left(  0,1\right]  .$ Also,
all functions mentioned in this paper should be assumed to be measurable functions.

For subsets $I$ and $J$ of $\mathbb{R},$ we say that $I$ and $J$ are
translation congruent if there is a bijection $\rho:I\rightarrow J$ and an
integer valued-function $k$ on $I$ such that $\rho\left(  \lambda\right)
=\lambda+k\left(  \lambda\right)  .$ For example, if $I$ is translation
congruent to $\left[  0,1\right)  $ then $I$ tiles the real line by
$\mathbb{Z}.$ Next, we say that $I$ and $J$ are dilation congruent if there
exists a bijection $\delta:I\rightarrow J$ and an integer-valued function $j$
on $I$ such that $\delta\left(  \lambda\right)  =2^{j\left(  \lambda\right)
}\lambda.$

\subsection{Plancherel theory}

The facts presented in this subsection are pretty standard. We refer the
interested reader to Chapter $7,$ \cite{Folland}, Chapter $2$ \cite{Thang} and
Chapter $4,$ \cite{Corwin}.

Let
\[
\mathbb{P}=\left\{  \left[
\begin{array}
[c]{cccc}%
1 & 0 & -y & z\\
0 & 1 & 0 & y\\
0 & 0 & 1 & 0\\
0 & 0 & 0 & 1
\end{array}
\right]  :z,y\in%
\mathbb{R}
\right\}
\]
be a maximal abelian subgroup of the Heisenberg group. Then the Heisenberg
group is isomorphic to a semi-direct product of the type
\[
\mathbb{P\rtimes}\left\{  \left[
\begin{array}
[c]{cccc}%
1 & x & 0 & 0\\
0 & 1 & 0 & 0\\
0 & 0 & 1 & 0\\
0 & 0 & 0 & 1
\end{array}
\right]  :x\in%
\mathbb{R}
\right\}  \text{ }%
\]
which is also isomorphic to $\mathbb{P\rtimes}%
\mathbb{R}
.$ For each $\lambda\in%
\mathbb{R}
,$ we define a corresponding character $\chi_{\lambda}$ on $\mathbb{P}$ by
\[
\chi_{\lambda}\left(  \left[
\begin{array}
[c]{cccc}%
1 & 0 & -y & z\\
0 & 1 & 0 & y\\
0 & 0 & 1 & 0\\
0 & 0 & 0 & 1
\end{array}
\right]  \right)  =e^{-2\pi i\lambda\left(  z\right)  }.
\]
According to the theory of Mackey, or the orbit method (see \cite{Corwin}) it
is not too hard to show that the unitary dual of $\mathbb{H}$ which we denote
by $\widehat{\mathbb{H}}$ is up to a null set equal to
\[
\left\{  \pi_{\lambda}=\mathrm{Ind}_{\mathbb{P}}^{\mathbb{H}}\left(
\chi_{\lambda}\right)  :\lambda\in%
\mathbb{R}
^{\ast}\right\}  .
\]
In fact, there are two families of unitary irreducible representations of the
Heisenberg group. The first family of irreducible representations only
contains characters and forms a set of Plancherel measure zero, and is
therefore negligible. The second family of irreducible representations are
infinite-dimensional representations which are parametrized by the punctured
line as follows: $\lambda\mapsto\pi_{\lambda}=\mathrm{Ind}_{\mathbb{P}%
}^{\mathbb{H}}\left(  \chi_{\lambda}\right)  .$ The reader who is not familiar
with the theory of induced representation is invited to refer to the book of
Folland \cite{Folland}. Based on properties of induced representations, each
unitary representation $\pi_{\lambda}$ is realized as acting in the Hilbert
space of square integrable functions defined over $\mathbb{H}/\mathbb{P}.$
More precisely, $\pi_{\lambda}$ acts in $L^{2}\left(  \mathbb{H}%
/\mathbb{P}\right)  $ which we naturally identify with $L^{2}\left(
\mathbb{R}
\right)  $ such that for $f\in L^{2}\left(
\mathbb{R}
\right)  ,$
\[
\pi_{\lambda}\left(  \left[
\begin{array}
[c]{cccc}%
1 & 0 & 0 & z\\
0 & 1 & 0 & 0\\
0 & 0 & 1 & 0\\
0 & 0 & 0 & 1
\end{array}
\right]  \right)  f\left(  t\right)  =e^{2\pi i\lambda\left(  z\right)
}f\left(  t\right)  ,
\]%
\[
\pi_{\lambda}\left(  \left[
\begin{array}
[c]{cccc}%
1 & 0 & -y & 0\\
0 & 1 & 0 & y\\
0 & 0 & 1 & 0\\
0 & 0 & 0 & 1
\end{array}
\right]  \right)  f\left(  t\right)  =e^{-2\pi i\lambda yt}f\left(  t\right)
\]
and
\[
\pi_{\lambda}\left(  \left[
\begin{array}
[c]{cccc}%
1 & 0 & x & 0\\
0 & 1 & 0 & 0\\
0 & 0 & 1 & 0\\
0 & 0 & 0 & 1
\end{array}
\right]  \right)  f\left(  t\right)  =f\left(  t-x\right)  .
\]

Let $\mathcal{F}$ be the operator-valued Fourier transform defined on
$L^{2}(\mathbb{H})\cap L^{1}(\mathbb{H})$ by $\mathcal{F}\left(  f\right)
\left(  \lambda\right)  =\int_{\mathbb{H}}\pi_{\lambda}\left(  n\right)
f\left(  n\right)  dn.$ The Fourier transform can also be defined as follows:
for any vectors $u,v\in L^{2}\left(
\mathbb{R}
\right)  ,$%
\[
\left\langle \mathcal{F}\left(  f\right)  \left(  \lambda\right)
u,v\right\rangle =\int_{\mathbb{H}}f\left(  n\right)  \left\langle
\pi_{\lambda}\left(  n\right)  u,v\right\rangle dn.
\]
Since the $\pi_{\lambda}\left(  n\right)  $ are unitary operators, then
\[
\left\vert \left\langle \pi_{\lambda}\left(  n\right)  u,v\right\rangle
\right\vert \leq\left\Vert u\right\Vert _{L^{2}\left(
\mathbb{R}
\right)  }\left\Vert v\right\Vert _{L^{2}\left(
\mathbb{R}
\right)  }.
\]
Thus, for $f\in$ $L^{2}(\mathbb{H})\cap L^{1}(\mathbb{H})$
\begin{align*}
\left\vert \left\langle \mathcal{F}\left(  f\right)  \left(  \lambda\right)
u,v\right\rangle \right\vert  &  \leq\int_{\mathbb{H}}\left\vert f\left(
n\right)  \left\langle \pi_{\lambda}\left(  n\right)  u,v\right\rangle
\right\vert dn\\
&  \leq\left\Vert u\right\Vert _{L^{2}\left(
\mathbb{R}
\right)  }\left\Vert v\right\Vert _{L^{2}\left(
\mathbb{R}
\right)  }\left\Vert f\right\Vert _{L^{1}(\mathbb{H})}.
\end{align*}
Therefore, $\mathcal{F}\left(  f\right)  \left(  \lambda\right)  $ is a
bounded operator on $L^{2}\left(
\mathbb{R}
\right)  $. For $f\in$ $L^{2}(\mathbb{H})\cap L^{1}(\mathbb{H}),$ it can also
be shown (see Chapter $2,$ \cite{Thang}) that $\mathcal{F}\left(  f\right)
\left(  \lambda\right)  $ is actually a Hilbert-Schmidt operator and that the
Fourier transform $\mathcal{F}$ extends to the Plancherel transform:
\[
\mathbf{P}:L^{2}\left(  \mathbb{H}\right)  \rightarrow\int_{%
\mathbb{R}
^{\ast}}^{\oplus}L^{2}\left(  \mathbb{R}\right)  \otimes L^{2}\left(
\mathbb{R}\right)  \text{ }\left\vert \lambda\right\vert d\lambda
\]
such that for any $f\in L^{2}\left(  \mathbb{H}\right)  ,$%
\begin{equation}
\left\Vert f\right\Vert _{L^{2}\left(  \mathbb{H}\right)  }^{2}=\int_{%
\mathbb{R}
^{\ast}}\left\Vert \mathbf{P}f\left(  \lambda\right)  \right\Vert
_{\mathcal{HS}}^{2}\text{ }\left\vert \lambda\right\vert d\lambda.
\label{unitary}%
\end{equation}
Let $d\lambda$ be the Lebesgue measure on $%
\mathbb{R}
^{\ast}.$ The Plancherel measure for this group is supported on the punctured
line $%
\mathbb{R}
^{\ast}$ and is the weighted Lebesgue measure $\left\vert \lambda\right\vert
d\lambda.$ $||\cdot||_{\mathcal{HS}}$ denotes the Hilbert-Schmidt norm on
$L^{2}\left(  \mathbb{R}\right)  \otimes L^{2}\left(  \mathbb{R}\right)  $ and
clearly (\ref{unitary}) implies that $\mathbf{P}$ is a unitary transform.

We recall that given $T,P\in L^{2}\left(  \mathbb{R}\right)  \otimes
L^{2}\left(  \mathbb{R}\right)  ,$%
\[
\left\langle T,P\right\rangle _{\mathcal{HS}}=%
{\displaystyle\sum\limits_{k}}
\left\langle Te_{k},Pe_{k}\right\rangle
\]
where $\left\{  e_{k}\right\}  _{k}$ is an orthonormal basis for $L^{2}\left(
\mathbb{R}\right)  .$ Also, given a Hilbert-Schmidt operator $T:L^{2}\left(
\mathbb{R}\right)  \rightarrow L^{2}\left(  \mathbb{R}\right)  ,$ we say that
$T$ is a finite-rank operator if the range of $T$ is a finite dimensional
subspace of $L^{2}\left(  \mathbb{R}\right)  .$ It is well-known that
finite-rank operators form a dense subspace of $L^{2}\left(  \mathbb{R}%
\right)  \otimes L^{2}\left(  \mathbb{R}\right)  .$ Moreover, if $T$ is a
rank-one operator, then $T=u\otimes v$ for some $u,v\in L^{2}\left(
\mathbb{R}\right)  ,$ and the inner product of arbitrary rank-one operators in
$L^{2}\left(  \mathbb{R}\right)  \otimes L^{2}\left(  \mathbb{R}\right)  $ is
given by%
\[
\left\langle u\otimes v,w\otimes y\right\rangle _{\mathcal{HS}}=\left\langle
u,w\right\rangle _{L^{2}\left(  \mathbb{R}\right)  }\left\langle
v,y\right\rangle _{L^{2}\left(  \mathbb{R}\right)  }.
\]

Let $\tau$ be the biregular representation $\tau:\mathbb{H\times H}%
\rightarrow\mathcal{U}\left(  L^{2}\left(  \mathbb{H}\right)  \right)  $ (See
Page $233,$ \cite{Folland}) defined by
\[
\tau\left(  u,v\right)  f=L\left(  u\right)  R\left(  v\right)  f
\]
where $L\left(  u\right)  f\left(  x\right)  =f\left(  u^{-1}x\right)  $ and
$R\left(  v\right)  f\left(  x\right)  =f\left(  xv\right)  .$ The Plancherel
transform intertwines the biregular representation with a direct integral of
tensor representations as follows:
\[
\mathbf{P}\circ\tau\left(  x,y\right)  \circ\mathbf{P}^{-1}=\int_{%
\mathbb{R}
^{\ast}}^{\oplus}\pi_{\lambda}\left(  x\right)  \otimes\overline{\pi}%
_{\lambda}\left(  y\right)  \text{ }\left\vert \lambda\right\vert d\lambda.
\]
Furthermore, for $\lambda\in%
\mathbb{R}
^{\ast}$
\[
\mathbf{P}(\tau\left(  x,y\right)  \phi)(\lambda)=\pi_{\lambda}(x)\left(
\mathbf{P}\phi\right)  (\lambda)\left(  \overline{\pi}_{\lambda}\left(
y\right)  \right)  ^{\ast}=\left[  \pi_{\lambda}\left(  x\right)
\otimes\overline{\pi}_{\lambda}\left(  y\right)  \right]  \left(
\mathbf{P}\phi\right)  (\lambda)
\]
with $\overline{\pi}_{\lambda}$ being the contragredient of the representation
$\pi_{\lambda}$ acting in the dual of $L^{2}\left(
\mathbb{R}
\right)  .$ We remind the reader that
\[
\overline{\pi}_{\lambda}\left(  x\right)  =\pi_{\lambda}\left(  x^{-1}\right)
^{tr}%
\]
where $tr$ denotes the transpose of an operator.

\subsection{A class of dilations}

Now, we will present a class of dilations which will be important in the
construction of wavelets. Also, in the proof of Lemma \ref{cyclic}, we will
clarify why this particular realization of the Heisenberg group as a subgroup
of $GL\left(  4,%
\mathbb{R}
\right)  $ is convenient. In fact, this finite-dimensional representation of
the Heisenberg group allows us to define a natural automorphism of the
Heisenberg group (obtained by conjugations) which generates a class of
dilations. Put
\[
A=\left[
\begin{array}
[c]{cccc}%
\frac{bc}{d} & 0 & 0 & 0\\
0 & b & 0 & 0\\
0 & 0 & c & 0\\
0 & 0 & 0 & d
\end{array}
\right]  \in GL\left(  4,%
\mathbb{R}
\right)
\]
where $b,c,d\in%
\mathbb{R}
^{\ast}.$ Let $\varphi_{A}:\mathbb{H\rightarrow H}$ such that
\[
\varphi_{A}\left(  M\right)  =AMA^{-1}.
\]

\begin{lemma}
\label{cyclic}$\left\langle \varphi_{A}\right\rangle $ is a subgroup of
$\mathrm{Aut}\left(  \mathbb{H}\right)  .$
\end{lemma}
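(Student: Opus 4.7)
The plan is to verify directly that $\varphi_A$ is a well-defined element of $\mathrm{Aut}(\mathbb{H})$; once that is established, $\langle \varphi_A \rangle$, being by definition the cyclic subgroup of $\mathrm{Aut}(\mathbb{H})$ generated by $\varphi_A$, is automatically a subgroup. So the real content reduces to checking three facts: (a) $\varphi_A$ is a group homomorphism, (b) $\varphi_A(\mathbb{H}) \subseteq \mathbb{H}$, and (c) $\varphi_A$ is a bijection of $\mathbb{H}$.

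Step (a) requires no work: for any $M, N$ in the matrix group,
\[
\varphi_A(MN) = A M N A^{-1} = (AMA^{-1})(ANA^{-1}) = \varphi_A(M)\varphi_A(N),
\]
so conjugation by an invertible matrix is always a homomorphism of $GL(4,\mathbb{R})$ onto itself. Restricting to $\mathbb{H}$ does not change this.

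Step (b) is the only step requiring honest computation. The strategy is to let $M(x,y,z)$ denote the generic element of $\mathbb{H}$ as introduced in the body of the paper, and multiply out $AM(x,y,z)A^{-1}$ using the diagonal form of $A$. Because $A$ is diagonal, left multiplication by $A$ scales the $i$-th row by $A_{ii}$ and right multiplication by $A^{-1}$ scales the $j$-th column by $1/A_{jj}$, so the $(i,j)$-entry of $AMA^{-1}$ is simply $(A_{ii}/A_{jj})\, M_{ij}$. Applying this to the five nontrivial off-diagonal entries of $M(x,y,z)$ gives
\[
AM(x,y,z)A^{-1} = M\!\left(\tfrac{c}{d}x,\; \tfrac{b}{d}y,\; \tfrac{bc}{d^{2}}z\right),
\]
and in particular the $(1,3)$- and $(2,4)$-entries transform to $-by/d$ and $+by/d$ respectively, exactly the relation required for membership in $\mathbb{H}$. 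This is precisely why the top-left entry of $A$ was taken to be $bc/d$: it is the value that makes the conjugation preserve the constraint coupling these two coordinates. So $\varphi_A(\mathbb{H}) \subseteq \mathbb{H}$.

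Step (c) follows from step (b) applied to $A^{-1}$: since $A^{-1}$ is again diagonal with entries of the same shape (with $b,c,d$ replaced by their reciprocals), the same computation shows $\varphi_{A^{-1}}$ maps $\mathbb{H}$ into $\mathbb{H}$, and evidently $\varphi_A \circ \varphi_{A^{-1}} = \varphi_{A^{-1}} \circ \varphi_A = \mathrm{id}_{\mathbb{H}}$. Hence $\varphi_A$ is a bijection of $\mathbb{H}$, and combining with (a) we conclude $\varphi_A \in \mathrm{Aut}(\mathbb{H})$; therefore $\langle \varphi_A \rangle \le \mathrm{Aut}(\mathbb{H})$. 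The main obstacle is merely the bookkeeping in step (b) — specifically verifying that the $(1,3)$ and $(2,4)$ entries remain linked by the correct sign and scalar, which is where the precise shape of $A$ is used.
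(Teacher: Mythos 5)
Your proof is correct and follows essentially the same route as the paper: both arguments come down to computing that conjugation by the diagonal matrix $A$ sends a generic element $M(x,y,z)$ to $M\left(\tfrac{c}{d}x,\ \tfrac{b}{d}y,\ \tfrac{bc}{d^{2}}z\right)$, the whole point being that the $(1,3)$ and $(2,4)$ entries remain negatives of each other precisely because $A_{11}=bc/d$. If anything your write-up is slightly tighter than the paper's: you correctly observe that multiplicativity of conjugation is automatic (the paper instead verifies it by an explicit product computation), and you make bijectivity explicit by noting that $A^{-1}$ has the same diagonal shape, a point the paper leaves implicit.
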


\begin{proof}
Let
\[
X=\left[
\begin{array}
[c]{cccc}%
a & 0 & 0 & 0\\
0 & b & 0 & 0\\
0 & 0 & c & 0\\
0 & 0 & 0 & d
\end{array}
\right]  .
\]
With some simple calculations, it is easy to see that
\begin{equation}
X\left[
\begin{array}
[c]{cccc}%
1 & x & -y & z\\
0 & 1 & 0 & y\\
0 & 0 & 1 & 0\\
0 & 0 & 0 & 1
\end{array}
\right]  X^{-1}=\left[
\begin{array}
[c]{cccc}%
1 & \frac{a}{b}x & -\frac{a}{c}y & \frac{a}{d}z\\
0 & 1 & 0 & \frac{b}{d}y\\
0 & 0 & 1 & 0\\
0 & 0 & 0 & 1
\end{array}
\right]  .\label{conj}%
\end{equation}
Therefore, (\ref{conj}) induces an action of the group generated by $X$ on
$\mathbb{H}$ if and only if $\frac{b}{d}=\frac{a}{c}.$ Next, solving the above
equation for $a,$ we obtain%
\[
X=A=\left[
\begin{array}
[c]{cccc}%
\frac{bc}{d} & 0 & 0 & 0\\
0 & b & 0 & 0\\
0 & 0 & c & 0\\
0 & 0 & 0 & d
\end{array}
\right]  .
\]
Now, to show that $A$ defines an automorphism, we observe that
\[
A\left(  \left[
\begin{array}
[c]{cccc}%
1 & x_{1} & -y_{1} & z_{1}\\
0 & 1 & 0 & y_{1}\\
0 & 0 & 1 & 0\\
0 & 0 & 0 & 1
\end{array}
\right]  \left[
\begin{array}
[c]{cccc}%
1 & x_{2} & -y_{2} & z_{2}\\
0 & 1 & 0 & y_{2}\\
0 & 0 & 1 & 0\\
0 & 0 & 0 & 1
\end{array}
\right]  \right)  A^{-1}%
\]
is equal to
\begin{equation}
\left[
\begin{array}
[c]{cccc}%
1 & \frac{c}{d}\left(  x_{1}+x_{2}\right)   & -\frac{b}{d}\left(  y_{1}%
+y_{2}\right)   & b\frac{c}{d^{2}}\left(  z_{1}+z_{2}+x_{1}y_{2}\right)  \\
0 & 1 & 0 & \frac{b}{d}\left(  y_{1}+y_{2}\right)  \\
0 & 0 & 1 & 0\\
0 & 0 & 0 & 1
\end{array}
\right]  .\label{pr}%
\end{equation}
Finally,%
\[
\left(  A\left[
\begin{array}
[c]{cccc}%
1 & x_{1} & -y_{1} & z_{1}\\
0 & 1 & 0 & y_{1}\\
0 & 0 & 1 & 0\\
0 & 0 & 0 & 1
\end{array}
\right]  A^{-1}\right)  \left(  A\left[
\begin{array}
[c]{cccc}%
1 & x_{2} & -y_{2} & z_{2}\\
0 & 1 & 0 & y_{2}\\
0 & 0 & 1 & 0\\
0 & 0 & 0 & 1
\end{array}
\right]  A^{-1}\right)
\]
is equal to (\ref{pr}) as well. Thus conjugation by $A$ defines a bijective
homomorphism of the Heisenberg group. 
\end{proof}

Now, we will define the dilation action which will play an essential role in
the construction of discrete wavelets in $L^{2}\left(  \mathbb{H}\right)  $.
Let $\left(  a,b\right)  \in%
\mathbb{R}
^{\ast}\times%
\mathbb{R}
^{\ast}$ and define
\[
A_{\left(  a,b\right)  }=A=\left[
\begin{array}
[c]{cccc}%
ab & 0 & 0 & 0\\
0 & b & 0 & 0\\
0 & 0 & a & 0\\
0 & 0 & 0 & 1
\end{array}
\right]  \in GL\left(  4,%
\mathbb{R}
\right)
\]
so that
\[
A\left[
\begin{array}
[c]{cccc}%
1 & x & -y & z\\
0 & 1 & 0 & y\\
0 & 0 & 1 & 0\\
0 & 0 & 0 & 1
\end{array}
\right]  A^{-1}=\left[
\begin{array}
[c]{cccc}%
1 & ax & -by & abz\\
0 & 1 & 0 & by\\
0 & 0 & 1 & 0\\
0 & 0 & 0 & 1
\end{array}
\right]  .
\]
Moreover, we assume that $\left(  a,b\right)  \in%
\mathbb{R}
^{\ast}\times%
\mathbb{R}
^{\ast}$ are chosen so that
\begin{equation}
ab=2.\label{con}%
\end{equation}
We acknowledge that (\ref{con}) seems rather peculiar at first. However, the
need to impose this condition will be clarified in the proof of Theorem
\ref{main} and Theorem \ref{last}. Moreover, we also observe that in general
the group $A^{-1}\Gamma A$ is not a subgroup of $\Gamma.$ In fact,
$A^{-1}\Gamma A$ is not generally even a discrete subgroup of the Heisenberg
group. Therefore, we remark that the structure that we are interested in this
paper does not fit the definition of affine structure given in \cite{Baggett}.

Put
\[
H=\left\langle A\right\rangle \cong%
\mathbb{Z}
\]
and define $D:H\rightarrow\mathcal{U}\left(  L^{2}\left(  \mathbb{H}\right)
\right)  $ such that
\[
D_{A}f\left(  n\right)  =\left\vert \det A\right\vert ^{-1/2}f\left(
A^{-1}nA\right)  =\frac{1}{2}f\left(  A^{-1}nA\right)  .
\]
Then $D$ is a dilation representation which will play an important role in the
construction of bandlimited wavelets on the Heisenberg group.

\subsection{Short survey on frame theory}

Given a countable sequence $\left\{  f_{i}\right\}  _{i\in I}$ of vectors in a
Hilbert space $\mathbf{H},$ we say $\left\{  f_{i}\right\}  _{i\in I}$ forms a
frame if and only if there exist strictly positive real numbers $A,B$ such
that for any vector $f\in\mathbf{H}$
\[
A\left\Vert f\right\Vert ^{2}\leq\sum_{i\in I}\left\vert \left\langle
f,f_{i}\right\rangle \right\vert ^{2}\leq B\left\Vert f\right\Vert ^{2}.
\]
In the case where $A=B$, the sequence of vectors $\left\{  f_{i}\right\}
_{i\in I}$ forms what we call a tight frame, and if $A=B=1$, $\left\{
f_{i}\right\}  _{i\in I}$ is called a Parseval frame\textbf{ }or\textbf{ }a
normalized tight frame. Let us suppose that $\left\{  f_{i}\right\}  _{i\in
I}$ is a Parseval frame in $\mathbf{H.}$ Then for any vector $h\in\mathbf{H,}$
we have the following remarkable expansion formula:
\[
h=\sum_{i\in I}\left\langle h,f_{i}\right\rangle f_{i}.
\]
A lattice $\mathcal{L}$ in $\mathbb{R}^{2d}$ is a discrete additive subgroup
of $\mathbb{R}^{2d}$. A lattice $\mathcal{L}$ is called a full-rank lattice if
$\mathcal{L}=M\mathbb{Z}^{2d}$ for some invertible matrix $M$ of order $2d$.
We say $\mathcal{L}$ is separable if $\mathcal{L}=A\mathbb{Z}^{d}\times
B\mathbb{Z}^{d}$ and $A,B$ are invertible matrices of order $d.$ Let $f\in
L^{2}\left(  \mathbb{R}^{d}\right)  $. The family of functions
\[
\mathcal{G}\left(  f,\mathcal{L}\right)  =\left\{  e^{2\pi i\left\langle
k,x\right\rangle }f\left(  x-n\right)  :k\in B\mathbb{Z}^{d},n\in
A\mathbb{Z}^{d}\right\}
\]
is called a Gabor system. The volume of $\mathcal{L}=M\mathbb{Z}^{2d}$ is
defined as $\mathrm{vol}\left(  \mathcal{L}\right)  =\left\vert \det
M\right\vert $ and the density of $\mathcal{L}$ is defined as $d\left(
\mathcal{L}\right)  =\left\vert \det M\right\vert ^{-1}.$

\begin{lemma}
\label{density}Given a separable full-rank lattice $\mathcal{L}=A\mathbb{Z}%
^{d}\times B\mathbb{Z}^{d}$ in $\mathbb{R}^{2d}$, the following statements are equivalent.
\end{lemma}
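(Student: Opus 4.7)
The plan is to establish the stated equivalences by reducing the separable case $\mathcal{L}=A\mathbb{Z}^d\times B\mathbb{Z}^d$ to a diagonal situation via the change of variables $x\mapsto Ax$ on the time side and $k\mapsto (B^{-1})^{\!\top}k$ on the frequency side. A unitary dilation $U_Af(x)=|\det A|^{1/2}f(Ax)$ conjugates a Gabor system over $A\mathbb{Z}^d\times B\mathbb{Z}^d$ into one over $\mathbb{Z}^d\times A^{\top}B\mathbb{Z}^d$, so we may assume $A=I$ from the outset and only a single invertible matrix $C=A^{\top}B$ controls the geometry. The volume and density then read $\mathrm{vol}(\mathcal{L})=|\det C|$ and $d(\mathcal{L})=|\det C|^{-1}$, and the proofs of the listed equivalences reduce to purely analytic statements about the Gabor system $\{e^{2\pi i\langle k,x\rangle}f(x-n)\}$ with $n\in\mathbb{Z}^d$, $k\in C\mathbb{Z}^d$.

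Next, I would bring in the Zak transform adapted to the lattice, namely $Z_\mathcal{L}f(x,\xi)=\sum_{n\in\mathbb{Z}^d}f(x-n)e^{2\pi i\langle n,\xi\rangle}$, followed by a quasi-periodic rescaling so that translation by $\mathbb{Z}^d$ and modulation by $C\mathbb{Z}^d$ become multiplication operators on $L^2$ of a fundamental domain. In this picture the frame operator is a multiplication operator whose symbol is an explicit quadratic expression in $Z_\mathcal{L}f$, and the various conditions (frame bounds, Parseval, Riesz basis, total family) all translate into pointwise almost-everywhere identities or inequalities for that symbol. This step is where the equivalences become transparent; in particular the density $d(\mathcal{L})$ appears naturally as the Jacobian of the change of fundamental domain and hence as the multiplicative normalization in the frame identity.

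After that, I would close the remaining implications using Poisson summation together with the Wexler--Raz biorthogonality, which for separable lattices takes the clean form of an orthogonality over the adjoint lattice $A^{-\top}\mathbb{Z}^d\times B^{-\top}\mathbb{Z}^d$. Combining the Zak-transform identity with the Wexler--Raz relation converts each condition on $\mathcal{G}(f,\mathcal{L})$ into the corresponding condition on the canonical dual window, and in the Parseval case forces $d(\mathcal{L})\geq 1$ with equality characterizing the Riesz basis / orthonormal basis regime.

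The main obstacle will be handling the matricial change of coordinates cleanly: one must check that the Zak-transform diagonalization, the rescaling of the fundamental domain, and the density factor $|\det C|^{-1}$ all fit together so that each of the stated conditions is genuinely equivalent and not merely implied in one direction. Once the diagonal reduction is in place the remaining work is bookkeeping of Jacobians and standard Fourier-series manipulations on the torus, so I would expect to state the Zak/Wexler--Raz apparatus explicitly and then deduce the equivalences in a single chain (i)$\Rightarrow$(ii)$\Rightarrow\cdots\Rightarrow$(i) rather than proving each pair of implications separately.
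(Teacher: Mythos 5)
First, note that the paper does not prove this lemma at all: it is stated and then attributed to Theorem 3.3 of Han and Wang \cite{Han Yang Wang}, so there is no internal argument to compare yours against. Judged on its own terms, your sketch has a genuine gap at its central step. You claim that after reducing to $\mathcal{L}=\mathbb{Z}^{d}\times C\mathbb{Z}^{d}$ the Zak transform turns the frame operator into a multiplication operator whose symbol is a quadratic expression in $Z_{\mathcal{L}}f$. That is true only at critical density ($|\det C|=1$), and with matrix-valued (vector-Zak) symbols at rational densities; for a generic invertible $C$ the modulation lattice $C\mathbb{Z}^{d}$ is incommensurable with the quasi-periodicity lattice of $Z_{\mathcal{L}}f$, and the frame operator is not diagonalized by any Zak transform. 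Since the lemma must cover arbitrary invertible $A,B$, the step where you say "the equivalences become transparent" fails in exactly the cases that carry the content of the theorem.

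The second, more serious gap is that nothing in your outline ever produces a window. The implications $(1)\Rightarrow(3)$ and $(1)\Rightarrow(2)$ are easy (every element of a Parseval frame has norm at most $1$, and $\Vert f\Vert^{2}=\mathrm{vol}(\mathcal{L})$, which is the paper's Lemma \ref{ONB copy(1)}), and $(3)\Rightarrow(2)$ is the classical density theorem, where Wexler--Raz or Ramanathan--Steger comparison arguments do apply. But the substance of Han--Wang's result is $(2)\Rightarrow(1)$: one must exhibit an $f$ with $\mathcal{G}(f,\mathcal{L})$ Parseval whenever $|\det A\det B|\leq1$. Their proof constructs a measurable set $\Omega$ that tiles $\mathbb{R}^{d}$ by translations from $A\mathbb{Z}^{d}$ and packs by the adjoint lattice $B^{-\top}\mathbb{Z}^{d}$ --- a nontrivial common-fundamental-domain theorem for pairs of lattices --- and takes $f$ to be the normalized indicator $|\det B|^{1/2}\chi_{\Omega}$. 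Zak-transform identities and Wexler--Raz biorthogonality characterize admissible windows but do not construct one, so your chain $(1)\Rightarrow(2)\Rightarrow(3)\Rightarrow(1)$ cannot close without this tiling construction or a substitute for it.
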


\begin{enumerate}
\item There exists $f\in L^{2}(\mathbb{R}^{d})$ such that $\mathcal{G}\left(
f,\mathcal{L}\right)  $ is a Parseval frame in $L^{2}\left(  \mathbb{R}%
^{d}\right)  .$

\item $\mathrm{vol}\left(  \mathcal{L}\right)  =\left\vert \det A\det
B\right\vert \leq1.$

\item There exists $f\in L^{2}\left(  \mathbb{R}^{d}\right)  $ such that
$\mathcal{G}\left(  f,\mathcal{L}\right)  $ is complete in $L^{2}\left(
\mathbb{R}^{d}\right)  .$
\end{enumerate}

See theorem $3.3$ in \cite{Han Yang Wang}.

\begin{lemma}
\label{ONB copy(1)} Let $\mathcal{L}$ be a full rank lattice in $\mathbb{R}%
^{2d}$. If $\mathcal{G}\left(  f,\mathcal{L}\right)  $ is a Parseval frame for
$L^{2}(\mathbb{R}^{d})$, then $\Vert f\Vert^{2}=\mathrm{vol}(\mathcal{L}).$
\end{lemma}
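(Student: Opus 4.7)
The plan is to exploit the covariance of the time--frequency shift $\pi(z)h(t)=e^{2\pi i\langle k,t\rangle}h(t-n)$ (where $z=(n,k)\in\mathbb{R}^{2d}$) under the Parseval frame condition, then recover $\|f\|^{2}$ by integrating over a fundamental domain and invoking the Moyal/orthogonality identity
\[
\int_{\mathbb{R}^{2d}}|\langle h,\pi(z)f\rangle|^{2}\,dz=\|h\|^{2}\|f\|^{2},
\]
which holds for every $f,h\in L^{2}(\mathbb{R}^{d})$ (this is a consequence of Plancherel applied to the short-time Fourier transform).

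First, I would fix an arbitrary $h\in L^{2}(\mathbb{R}^{d})$ and a point $z\in\mathbb{R}^{2d}$, then apply the Parseval identity not to $h$ itself but to $\pi(z)^{\ast}h$ (which has the same norm as $h$). Writing $\pi(z)\pi(\ell)=c(z,\ell)\,\pi(z+\ell)$ with $|c(z,\ell)|=1$, one sees that
\[
|\langle \pi(z)^{\ast}h,\pi(\ell)f\rangle|^{2}=|\langle h,\pi(z+\ell)f\rangle|^{2},
\]
so the Parseval condition becomes the $z$-parametrized family
\[
\sum_{\ell\in\mathcal{L}}|\langle h,\pi(z+\ell)f\rangle|^{2}=\|h\|^{2}\qquad\text{for every }z\in\mathbb{R}^{2d}.
\]

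Next, I would integrate this identity over a fundamental domain $F$ of the lattice $\mathcal{L}=M\mathbb{Z}^{2d}$, which has Lebesgue measure $\mathrm{vol}(\mathcal{L})=|\det M|$. The right-hand side integrates to $\mathrm{vol}(\mathcal{L})\|h\|^{2}$. On the left, since the translates $F+\ell$ tile $\mathbb{R}^{2d}$ up to measure zero, Tonelli (all integrands are non-negative) gives
\[
\int_{F}\sum_{\ell\in\mathcal{L}}|\langle h,\pi(z+\ell)f\rangle|^{2}\,dz=\int_{\mathbb{R}^{2d}}|\langle h,\pi(w)f\rangle|^{2}\,dw=\|h\|^{2}\|f\|^{2}
\]
by the Moyal identity quoted above. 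Equating the two evaluations yields $\|h\|^{2}\|f\|^{2}=\mathrm{vol}(\mathcal{L})\|h\|^{2}$, and choosing any $h\neq 0$ produces $\|f\|^{2}=\mathrm{vol}(\mathcal{L})$.

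The main technical point is not conceptually deep but must be handled cleanly: one needs the lattice-shift formula $\pi(z)\pi(\ell)=c(z,\ell)\pi(z+\ell)$ with a unimodular cocycle, so that the absolute values survive the rearrangement, and one needs $F+\mathcal{L}$ to tile $\mathbb{R}^{2d}$ up to a null set to justify the unfolding of the sum-integral into the full integral over $\mathbb{R}^{2d}$. Both are standard. The use of Tonelli requires only non-negativity, which is automatic, so the Moyal identity delivers the conclusion with no further convergence assumptions on $f$.
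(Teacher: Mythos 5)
Your proof is correct. Note first that the paper does not actually prove this lemma --- it states it and refers the reader to Han and Wang \cite{Han Yang Wang} --- so there is no internal argument to compare against; what you have supplied is a complete, self-contained proof. Your route is the standard periodization argument: apply the Parseval identity to $\pi(z)^{\ast}h$, use the unimodular commutation cocycle $\pi(z)\pi(\ell)=c(z,\ell)\pi(z+\ell)$ to turn the frame condition into the $z$-translated identity $\sum_{\ell\in\mathcal{L}}|\langle h,\pi(z+\ell)f\rangle|^{2}=\Vert h\Vert^{2}$, integrate over a fundamental domain of measure $\mathrm{vol}(\mathcal{L})$, unfold the sum via Tonelli, and finish with the Moyal orthogonality relation for the short-time Fourier transform. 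Each of these steps is valid: the Parseval condition does hold for $\pi(z)^{\ast}h$ since it holds for every vector, the cocycle is genuinely unimodular for time--frequency shifts, the translates of a fundamental domain tile $\mathbb{R}^{2d}$ up to a null set, and non-negativity makes the interchange of sum and integral harmless. The one presentational point worth making explicit is that for a general (possibly non-separable) full-rank lattice the system $\mathcal{G}(f,\mathcal{L})$ must be read as $\{\pi(\ell)f:\ell\in\mathcal{L}\}$ rather than via the separable product formula the paper writes down; your argument implicitly adopts this reading, which is the correct level of generality for the lemma as stated. Your approach has the advantage of requiring nothing beyond the Moyal identity, whereas the cited Han--Wang treatment situates the result within their broader lattice-tiling framework.
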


For a complete proof of the Lemma above, we refer the reader to \cite{Han Yang
Wang}. The following lemma is due to Khosravi and Asgari (see Theorem $2.3$
\cite{tensor})

\begin{lemma}
\label{tensor}Let $\left\{  x_{n}\right\}  _{n\in I}$ and $\left\{
y_{m}\right\}  _{m\in J}$ be two Parseval frames for Hilbert spaces $H$ and
$K,$ respectively. Then $\left\{  x_{n}\otimes y_{m}\right\}  _{\left(
n,m\right)  \in I\times J}$ is a Parseval frame for the Hilbert space
$H\otimes K.$
\end{lemma}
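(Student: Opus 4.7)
The plan is to reduce the Parseval frame identity for $\{x_n\otimes y_m\}_{(n,m)\in I\times J}$ to the standard fact that the Hilbert space tensor product of two isometries is again an isometry. First, I would introduce the analysis operators $\Theta_x\colon H\to\ell^2(I)$ defined by $\Theta_x(h)=(\langle h,x_n\rangle)_{n\in I}$ and $\Theta_y\colon K\to\ell^2(J)$ defined by $\Theta_y(k)=(\langle k,y_m\rangle)_{m\in J}$. Because $\{x_n\}_{n\in I}$ and $\{y_m\}_{m\in J}$ are Parseval frames, both $\Theta_x$ and $\Theta_y$ are isometries.

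Next I would form the tensor product $\Theta_x\otimes\Theta_y\colon H\otimes K\to\ell^2(I)\otimes\ell^2(J)$ and use the canonical unitary $\ell^2(I)\otimes\ell^2(J)\cong\ell^2(I\times J)$ sending $\delta_n\otimes\delta_m$ to $\delta_{(n,m)}$. On a rank-one tensor $u\otimes v$, the inner product formula recalled in the preliminaries gives $\langle u\otimes v,x_n\otimes y_m\rangle_{\mathcal{HS}}=\langle u,x_n\rangle\langle v,y_m\rangle$, so a direct computation yields the entry-wise identity $[(\Theta_x\otimes\Theta_y)(u\otimes v)]_{(n,m)}=\langle u\otimes v,x_n\otimes y_m\rangle_{\mathcal{HS}}$. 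Since finite-rank tensors are dense in $H\otimes K$ and both sides are linear and continuous in $T$, this extends to $[(\Theta_x\otimes\Theta_y)T]_{(n,m)}=\langle T,x_n\otimes y_m\rangle$ for every $T\in H\otimes K$. Invoking the fact that the tensor product of two isometries is an isometry, for any such $T$ I then get $\sum_{(n,m)\in I\times J}|\langle T,x_n\otimes y_m\rangle|^2=\|(\Theta_x\otimes\Theta_y)T\|_{\ell^2(I\times J)}^2=\|T\|_{H\otimes K}^2$, which is the Parseval identity.

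The main obstacle is the one nontrivial input: verifying that $\Theta_x\otimes\Theta_y$ is well-defined and isometric on the \emph{completed} Hilbert space tensor product, not merely on the algebraic tensor product. This is a standard piece of Hilbert-space tensor product machinery and is the step I would pin down first. An alternative that sidesteps it would be to use the singular value decomposition $T=\sum_k\sigma_k\,u_k\otimes v_k$ with $\{u_k\},\{v_k\}$ orthonormal, expand $|\langle T,x_n\otimes y_m\rangle|^2$, and sum in $(n,m)$ using the Parseval property of each factor together with Parseval on $\ell^2$; the bookkeeping is heavier but the end result is the same.
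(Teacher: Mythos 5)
Your proof is correct. Note, however, that the paper does not actually prove this lemma: it simply cites Theorem 2.3 of Khosravi and Asgari (reference \cite{tensor}) and moves on, so there is no in-paper argument to compare yours against. Your route --- identifying the Parseval property with the analysis operators $\Theta_x$, $\Theta_y$ being isometries, passing to $\Theta_x\otimes\Theta_y$ on the completed tensor product, and using the canonical unitary $\ell^2(I)\otimes\ell^2(J)\cong\ell^2(I\times J)$ --- is the standard and arguably cleanest argument. The one step you rightly flag, that the tensor product of two isometries is an isometry on the completion, is routine: on a finite sum $\sum_i u_i\otimes v_i$ one computes
\[
\Bigl\Vert (\Theta_x\otimes\Theta_y)\sum_i u_i\otimes v_i\Bigr\Vert^2=\sum_{i,j}\langle \Theta_x u_i,\Theta_x u_j\rangle\,\langle \Theta_y v_i,\Theta_y v_j\rangle=\sum_{i,j}\langle u_i,u_j\rangle\,\langle v_i,v_j\rangle,
\]
so the map is isometric on the algebraic tensor product and extends by density. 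Your entrywise identification $[(\Theta_x\otimes\Theta_y)T]_{(n,m)}=\langle T,x_n\otimes y_m\rangle$ also extends correctly by linearity and continuity from rank-one tensors. The cited result of Khosravi--Asgari is stated for general (not necessarily Parseval) frames, where the tensor system is a frame with bounds given by the products of the factor bounds; your argument specializes to the Parseval case but generalizes in the obvious way by replacing ``isometry'' with the two-sided norm inequalities for the analysis operators, so nothing essential is lost.
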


\section{Proofs of the main results}

\begin{lemma}
\label{LEM} Let $v\in L^{2}\left(
\mathbb{R}
\right)  ,$ and $\overline{v}$ its conjugate. If $\mathcal{G}\left(
\overline{v},%
\mathbb{Z}
\times\lambda%
\mathbb{Z}
\right)  $ is a Parseval Gabor frame then
\[
\sum_{\eta\in\mathbb{I}}\left\vert \left\langle \overline{\pi}_{\lambda
}\left(  \eta\right)  v,u\right\rangle _{L^{2}\left(
\mathbb{R}
\right)  }\right\vert ^{2}=\left\Vert u\right\Vert _{L^{2}\left(
\mathbb{R}
\right)  }^{2}%
\]
for all $u\in L^{2}\left(
\mathbb{R}
\right)  $ and
\[
\mathbb{I}=\left\{  \left[
\begin{array}
[c]{cccc}%
1 & m_{3} & -m_{2} & 0\\
0 & 1 & 0 & m_{2}\\
0 & 0 & 1 & 0\\
0 & 0 & 0 & 1
\end{array}
\right]  :\left(  m_{2},m_{3}\right)  \in%
\mathbb{Z}
^{2}\right\}  .
\]

\end{lemma}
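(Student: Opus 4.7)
The plan is to unwind what $\overline{\pi}_\lambda(\eta)$ does to $v$ for $\eta \in \mathbb{I}$, rewrite $|\langle \overline{\pi}_\lambda(\eta) v, u\rangle|^2$ as the squared modulus of a Gabor coefficient against $\mathcal{G}(\overline{v}, \mathbb{Z} \times \lambda \mathbb{Z})$, and then invoke the Parseval hypothesis. The whole lemma is essentially a reindexing argument, but one has to be careful about the complex conjugations coming from the contragredient.

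First, I would parametrize $\mathbb{I}$ by $(m_2, m_3) \in \mathbb{Z}^2$ and factor the matrix $\eta$ as $M(0, m_2, 0)\, M(m_3, 0, 0)$ using the matrix multiplication in the Heisenberg realization. The Schr\"odinger formulas from the Preliminaries then give
\[
\pi_\lambda(\eta) f(t) = e^{-2\pi i \lambda m_2 t}\, f(t - m_3).
\]
Using the defining relation $\overline{\pi}_\lambda(\eta) = \pi_\lambda(\eta^{-1})^{tr}$ together with a short kernel/transpose calculation (in which the scalar $e^{2\pi i \lambda m_2 m_3}$ produced by inverting $\eta$ is cancelled by the transpose), I would verify that
\[
\overline{\pi}_\lambda(\eta) v(t) = e^{2\pi i \lambda m_2 t}\, v(t - m_3),
\]
so $\overline{\pi}_\lambda(\eta)$ is the modulation-translation with the \emph{opposite} modulation sign from $\pi_\lambda(\eta)$.

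Computing the $L^2$-inner product and passing to the modulus then yields
\[
\bigl|\langle \overline{\pi}_\lambda(\eta) v, u\rangle\bigr|^2 = \Bigl|\int_{\mathbb{R}} u(t)\, e^{-2\pi i \lambda m_2 t}\, \overline{v}(t - m_3)\, dt\Bigr|^2,
\]
where the $\overline{v}$ appears via the identity $\overline{\int u\,h} = \int \overline{u}\,\overline{h}$, which simultaneously replaces $v$ by $\overline{v}$ and $u$ by $\overline{u}$. The right-hand side is precisely $|\langle \overline{u},\, e^{-2\pi i \lambda m_2 t}\overline{v}(t-m_3)\rangle|^2$, i.e.\ the squared modulus of a Gabor coefficient of $\overline{u}$ against an element of $\mathcal{G}(\overline{v}, \mathbb{Z} \times \lambda \mathbb{Z})$ after the innocuous sign relabeling $m_2 \mapsto -m_2$ (which leaves $\mathbb{Z}$ invariant). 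Summing over $\eta \in \mathbb{I}$ becomes a sum over all of $\mathbb{Z}^2$, so the assumed Parseval frame property applied to $\overline{u}$ gives $\|\overline{u}\|^2 = \|u\|^2$, as required.

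The main bookkeeping obstacle is keeping the conjugations and sign conventions straight: the modulation character in $\overline{\pi}_\lambda$ flips sign relative to $\pi_\lambda$, the $\pi_\lambda(\eta^{-1})^{tr}$ recipe introduces a phase that cancels, and the conjugation-in-the-integrand trick trades $v$ for $\overline{v}$ at the cost of replacing $u$ by $\overline{u}$. None of these manipulations is deep, but they must be done carefully; once aligned, the lemma reduces to the Parseval identity for the assumed Gabor frame.
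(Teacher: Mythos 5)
Your proof is correct, and the computations check out: $\pi_\lambda(\eta)f(t)=e^{-2\pi i\lambda m_2 t}f(t-m_3)$, the phase $e^{2\pi i\lambda m_2 m_3}$ from $\eta^{-1}$ does cancel under the transpose so that $\overline{\pi}_\lambda(\eta)v(t)=e^{2\pi i\lambda m_2 t}v(t-m_3)$, and the final reindexing $m_2\mapsto -m_2$ is harmless. The route differs from the paper's in execution though not in its核 idea: the paper never computes $\overline{\pi}_\lambda(\eta)$ explicitly. Instead it expands $v=\sum_k\alpha_k e_k$ and $u=\sum_j\beta_j e_j$ in an orthonormal basis with $\overline{e_k}=e_k$, uses only the abstract identity $\langle\pi_\lambda(\eta)^{tr}e_j,e_k\rangle=\langle\pi_\lambda(\eta)e_k,e_j\rangle$ to convert $\langle\overline{\pi}_\lambda(\eta)v,u\rangle$ into $\langle\overline{u},\pi_\lambda(\eta)\overline{v}\rangle$ up to conjugation, and then invokes the Parseval hypothesis, relying on the identification $\pi_\lambda(\mathbb{I})\overline{v}=\mathcal{G}(\overline{v},\mathbb{Z}\times\lambda\mathbb{Z})$ made in the proof of Theorem 1. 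Both arguments reduce to the same identity $\left\vert\langle\overline{\pi}_\lambda(\eta)v,u\rangle\right\vert=\left\vert\langle\overline{u},\pi_\lambda(\eta)\overline{v}\rangle\right\vert$ followed by Parseval applied to $\overline{u}$; your concrete Schr\"odinger-model computation makes the phase cancellation and the Gabor-system identification explicit (which the paper leaves implicit), while the paper's basis argument is shorter and independent of the particular realization of $\pi_\lambda$.
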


\begin{proof}
Let us suppose that $\mathcal{G}\left(  \overline{v},%
\mathbb{Z}
\times\lambda%
\mathbb{Z}
\right)  $ is a Parseval Gabor frame. Now, let
\[
v=\sum_{k\in\mathbb{J}}\alpha_{k}e_{k}\text{ and }u=\sum_{j\in\mathbb{J}}%
\beta_{j}e_{j},\text{ }\alpha_{k},\beta_{j}\in%
\mathbb{C}
\]
where $\left\{  e_{k}:k\in\mathbb{J}\right\}  $ is an orthonormal basis for
$L^{2}\left(
\mathbb{R}
\right)  $ such that $\overline{e_{k}}=e_{k}$ for all $k\in\mathbb{J}$. Then
\begin{align*}
\sum_{\eta\in\mathbb{I}}\left\vert \left\langle \overline{\pi}_{\lambda
}\left(  \eta\right)  v,u\right\rangle _{L^{2}\left(
\mathbb{R}
\right)  }\right\vert ^{2}  &  =\sum_{\eta\in\mathbb{I}}\left\vert
\left\langle \overline{\pi}_{\lambda}\left(  \eta\right)  \left(  \overset
{v}{\overbrace{\sum_{k\in\mathbb{J}}\alpha_{k}e_{k}}}\right)  ,\overset
{u}{\overbrace{\sum_{j\in\mathbb{J}}\beta_{j}e_{j}}}\right\rangle \right\vert
^{2}\\
&  =\sum_{\eta\in\mathbb{I}}\left\vert \sum_{k\in\mathbb{J}}\sum
_{j\in\mathbb{J}}\overline{\beta_{j}}\alpha_{k}\left\langle \overline{\pi
}_{\lambda}\left(  \eta\right)  e_{k},e_{j}\right\rangle \right\vert ^{2}.
\end{align*}
Next, since
\[
\overline{\pi}_{\lambda}\left(  \eta\right)  e_{k}=\pi_{\lambda}\left(
\eta^{-1}\right)  ^{tr}e_{k},
\]
then
\begin{align*}
\sum_{\eta\in\mathbb{I}}\left\vert \left\langle \overline{\pi}_{\lambda
}\left(  \eta\right)  v,u\right\rangle \right\vert ^{2}  &  =\sum_{\eta
\in\mathbb{I}}\left\vert \sum_{k\in\mathbb{J}}\sum_{j\in\mathbb{J}}%
\overline{\beta_{j}}\alpha_{k}\left\langle \pi_{\lambda}\left(  \eta
^{-1}\right)  ^{tr}e_{k},e_{j}\right\rangle _{L^{2}\left(
\mathbb{R}
\right)  }\right\vert ^{2}\\
&  =\sum_{\eta\in\mathbb{I}}\left\vert \sum_{k\in\mathbb{J}}\sum
_{j\in\mathbb{J}}\overline{\beta_{j}}\alpha_{k}\left\langle \pi_{\lambda
}\left(  \eta^{-1}\right)  e_{j},e_{k}\right\rangle _{L^{2}\left(
\mathbb{R}
\right)  }\right\vert ^{2}.
\end{align*}
The second equality above is due to the fact that
\[
\left\langle \pi_{\lambda}\left(  \eta\right)  ^{tr}e_{j},e_{k}\right\rangle
_{L^{2}\left(
\mathbb{R}
\right)  }=\left\langle \pi_{\lambda}\left(  \eta\right)  e_{k},e_{j}%
\right\rangle _{L^{2}\left(
\mathbb{R}
\right)  }.
\]
Next,
\begin{align*}
\sum_{\eta\in\mathbb{I}}\left\vert \left\langle \overline{\pi}_{\lambda
}\left(  \eta\right)  v,u\right\rangle _{L^{2}\left(
\mathbb{R}
\right)  }\right\vert ^{2}  &  =\sum_{\eta\in\mathbb{I}}\left\vert
\left\langle \sum_{j\in\mathbb{J}}\pi_{\lambda}\left(  \eta^{-1}\right)
\overline{\beta_{j}}e_{j},\sum_{k\in\mathbb{J}}\overline{\alpha_{k}}%
e_{k}\right\rangle _{L^{2}\left(
\mathbb{R}
\right)  }\right\vert ^{2}\\
&  =\sum_{\eta\in\mathbb{I}}\left\vert \left\langle \sum_{j\in\mathbb{J}%
}\overline{\beta_{j}}e_{j},\sum_{k\in\mathbb{J}}\overline{\alpha_{k}}%
\pi_{\lambda}\left(  \eta\right)  e_{k}\right\rangle _{L^{2}\left(
\mathbb{R}
\right)  }\right\vert ^{2}\\
&  =\sum_{\eta\in\mathbb{I}}\left\vert \left\langle \sum_{j\in\mathbb{J}%
}\overline{\beta_{j}}e_{j},\pi_{\lambda}\left(  \eta\right)  \overset
{=\overline{v}}{\overbrace{\sum_{k\in\mathbb{J}}\overline{\alpha_{k}}e_{k}}%
}\right\rangle _{L^{2}\left(
\mathbb{R}
\right)  }\right\vert ^{2}\\
&  =\sum_{\eta\in\mathbb{I}}\left\vert \left\langle \sum_{j\in\mathbb{J}%
}\overline{\beta_{j}}e_{j},\pi_{\lambda}\left(  \eta\right)  \overline
{v}\right\rangle _{L^{2}\left(
\mathbb{R}
\right)  }\right\vert ^{2}\\
&  =\left\Vert \sum_{j\in\mathbb{J}}\overline{\beta_{j}}e_{j}\right\Vert
_{L^{2}\left(
\mathbb{R}
\right)  }^{2}\\
&  =\left\Vert u\right\Vert _{L^{2}\left(
\mathbb{R}
\right)  }^{2}.
\end{align*}

\end{proof}

\subsection{Proof of Theorem \ref{main}}

Let
\[
\left(  \gamma,\eta\right)  =\left(  \left[
\begin{array}
[c]{cccc}%
1 & k_{3} & -k_{2} & k_{1}\\
0 & 1 & 0 & k_{2}\\
0 & 0 & 1 & 0\\
0 & 0 & 0 & 1
\end{array}
\right]  ,\left[
\begin{array}
[c]{cccc}%
1 & m_{3} & -m_{2} & 0\\
0 & 1 & 0 & m_{2}\\
0 & 0 & 1 & 0\\
0 & 0 & 0 & 1
\end{array}
\right]  \right)  \in\Lambda
\]
where $k_{i}$ and $m_{j}$ are integers. Also, we define
\begin{equation}
\Lambda_{1}=\left\{  \left(  \left[
\begin{array}
[c]{cccc}%
1 & k_{3} & -k_{2} & 0\\
0 & 1 & 0 & k_{2}\\
0 & 0 & 1 & 0\\
0 & 0 & 0 & 1
\end{array}
\right]  ,\left[
\begin{array}
[c]{cccc}%
1 & m_{3} & -m_{2} & 0\\
0 & 1 & 0 & m_{2}\\
0 & 0 & 1 & 0\\
0 & 0 & 0 & 1
\end{array}
\right]  \right)  :k_{i},m_{j}\in%
\mathbb{Z}
\right\}  .\label{above}%
\end{equation}
We will show that there is a function $f$ such that for any vector
$h\in\mathbf{H}_{\mathcal{S}}\mathbf{,}$
\[%
{\displaystyle\sum\limits_{\left(  \gamma,\eta\right)  \in\Lambda}}
\left\vert \left\langle h,\tau\left(  \gamma,\eta\right)  f\right\rangle
_{L^{2}\left(  \mathbb{H}\right)  }\right\vert ^{2}=\left\Vert h\right\Vert
_{L^{2}\left(  \mathbb{H}\right)  }^{2}.
\]
Using the fact that $\mathbf{P}$ is a unitary map, and that
\[
\mathbf{P}(\tau\left(  x,y\right)  \phi)(\lambda)=\pi_{\lambda}(x)\left(
\mathbf{P}\phi\right)  (\lambda)\left(  \overline{\pi}_{\lambda}\left(
y\right)  \right)  ^{\ast}=\left[  \pi_{\lambda}\left(  x\right)
\otimes\overline{\pi}_{\lambda}\left(  y\right)  \right]  \left(
\mathbf{P}\phi\right)  (\lambda),
\]
we have
\[%
{\displaystyle\sum\limits_{\left(  \gamma,\eta\right)  \in\Lambda}}
\left\vert \left\langle h,\tau\left(  \gamma,\eta\right)  f\right\rangle
_{L^{2}\left(  \mathbb{H}\right)  }\right\vert ^{2}=%
{\displaystyle\sum\limits_{\left(  \gamma,\eta\right)  \in\Lambda}}
\left\vert \int_{\mathcal{S}}\left\langle \mathbf{P}h(\lambda),\left[
\pi_{\lambda}\left(  \gamma\right)  \otimes\overline{\pi}_{\lambda}\left(
\eta\right)  \right]  \mathbf{P}f(\lambda)\left\vert \lambda\right\vert
\right\rangle _{\mathcal{HS}}d\lambda\right\vert ^{2}.
\]
Next, we define
\[
F_{\kappa,\eta}\left(  \lambda\right)  =\left\langle \mathbf{P}h(\lambda
),\left[  \pi_{\lambda}\left(  \kappa\right)  \otimes\overline{\pi}_{\lambda
}\left(  \eta\right)  \right]  \mathbf{P}f(\lambda)\left\vert \lambda
\right\vert \right\rangle _{\mathcal{HS}}.
\]
Using (\ref{above}), we obtain:%
\begin{align*}
&
{\displaystyle\sum\limits_{\left(  \gamma,\eta\right)  \in\Lambda}}
\left\vert \left\langle h,\tau\left(  \gamma,\eta\right)  f\right\rangle
_{L^{2}\left(  \mathbb{H}\right)  }\right\vert ^{2}\\
&  =%
{\displaystyle\sum\limits_{\left(  \kappa,\eta\right)  \in\Lambda_{1}}}
\sum_{k_{1}\in%
\mathbb{Z}
}\left\vert \int_{\mathcal{S}}e^{2\pi i\left\langle \lambda,k_{1}\right\rangle
}\overset{=F_{\kappa,\eta}\left(  \lambda\right)  }{\overbrace{\left\langle
\mathbf{P}h(\lambda),\left[  \pi_{\lambda}\left(  \kappa\right)
\otimes\overline{\pi}_{\lambda}\left(  \eta\right)  \right]  \mathbf{P}%
f(\lambda)\left\vert \lambda\right\vert \right\rangle }}d\lambda\right\vert
^{2}\\
&  =%
{\displaystyle\sum\limits_{\left(  \kappa,\eta\right)  \in\Lambda_{1}}}
\sum_{k_{1}\in%
\mathbb{Z}
}\left\vert \int_{\mathcal{S}}e^{2\pi i\left\langle \lambda,k_{1}\right\rangle
}F_{\kappa,\eta}\left(  \lambda\right)  d\lambda\right\vert ^{2}.
\end{align*}
Since $\mathcal{S}$ is translation congruent to $\left(  0,1\right]  $ then
$\left\{  e^{2\pi i\left\langle \lambda,k\right\rangle }\chi_{\mathcal{S}%
}\left(  \lambda\right)  :k\in%
\mathbb{Z}
\right\}  $ is an orthonormal basis for $L^{2}\left(  \mathcal{S}\right)  .$
Since $F_{\kappa,\eta}$ belongs to $L^{2}\left(  \mathcal{S}\right)  $ then
the function $k\mapsto\int_{\mathcal{S}}e^{2\pi i\left\langle \lambda
,k\right\rangle }F_{\kappa,\eta}\left(  \lambda\right)  d\lambda$ is the
Fourier transform of $F_{\kappa,\eta}$. Thus
\[%
{\displaystyle\sum\limits_{\left(  \gamma,\eta\right)  \in\Lambda}}
\left\vert \left\langle h,\tau\left(  \gamma,\eta\right)  f\right\rangle
_{L^{2}\left(  \mathbb{H}\right)  }\right\vert ^{2}=%
{\displaystyle\sum\limits_{\left(  \kappa,\eta\right)  \in\Lambda_{1}}}
\sum_{k_{1}\in%
\mathbb{Z}
}\left\vert \widehat{F_{\kappa,\eta}}\left(  k_{1}\right)  \right\vert ^{2}.
\]
Now, using Plancherel theorem on $L^{2}\left(  \mathcal{S}\right)  ,$ we
obtain%
\begin{align*}
&
{\displaystyle\sum\limits_{\left(  \gamma,\eta\right)  \in\Lambda}}
\left\vert \left\langle h,\tau\left(  \gamma,\eta\right)  f\right\rangle
_{L^{2}\left(  \mathbb{H}\right)  }\right\vert ^{2}=%
{\displaystyle\sum\limits_{\left(  \kappa,\eta\right)  \in\Lambda_{1}}}
\left\Vert \widehat{F_{\kappa,\eta}}\right\Vert _{L^{2}\left(  \mathcal{S}%
\right)  }^{2}\\
&  =%
{\displaystyle\sum\limits_{\left(  \kappa,\eta\right)  \in\Lambda_{1}}}
\int_{\mathcal{S}}\left\vert \left\langle \mathbf{P}h(\lambda),\left[
\pi_{\lambda}\left(  \kappa\right)  \otimes\overline{\pi}_{\lambda}\left(
\eta\right)  \right]  \left(  \mathbf{P}f\right)  (\lambda)\left\vert
\lambda\right\vert \right\rangle _{\mathcal{HS}}\right\vert ^{2}d\lambda.
\end{align*}
Letting $\mathbf{P}f(\lambda)=u_{\lambda}\otimes v_{\lambda},$ so that
$\left(  \mathbf{P}f(\lambda)\right)  _{\lambda\in\mathcal{S}}$ is a
measurable field of rank-one operators
\begin{align*}
&
{\displaystyle\sum\limits_{\left(  \gamma,\eta\right)  \in\Lambda}}
\left\vert \left\langle h,\tau\left(  \gamma,\eta\right)  f\right\rangle
_{L^{2}\left(  \mathbb{H}\right)  }\right\vert ^{2}\\
&  =%
{\displaystyle\sum\limits_{\left(  \kappa,\eta\right)  \in\Lambda_{1}}}
\int_{\mathcal{S}}\left\vert \left\langle \mathbf{P}h(\lambda),\left[
\pi_{\lambda}\left(  \kappa\right)  \otimes\overline{\pi}_{\lambda}\left(
\eta\right)  \right]  u_{\lambda}\otimes v_{\lambda}\left\vert \lambda
\right\vert \right\rangle _{\mathcal{HS}}\right\vert ^{2}d\lambda\\
&  =\int_{\mathcal{S}}%
{\displaystyle\sum\limits_{\left(  \kappa,\eta\right)  \in\Lambda_{1}}}
\left\vert \left\langle \mathbf{P}h(\lambda),\left[  \pi_{\lambda}\left(
\kappa\right)  \otimes\overline{\pi}_{\lambda}\left(  \eta\right)  \right]
\left(  u_{\lambda}\otimes v_{\lambda}\right)  \left\vert \lambda\right\vert
\right\rangle _{\mathcal{HS}}\right\vert ^{2}d\lambda\\
&  =\int_{\mathcal{S}}%
{\displaystyle\sum\limits_{\left(  \kappa,\eta\right)  \in\Lambda_{1}}}
\left\vert \left\langle \mathbf{P}h(\lambda),\pi_{\lambda}\left(
\kappa\right)  \left(  \left\vert \lambda\right\vert ^{1/4}u_{\lambda}\right)
\otimes\overline{\pi}_{\lambda}\left(  \eta\right)  \left(  \left\vert
\lambda\right\vert ^{1/4}v_{\lambda}\right)  \right\rangle _{\mathcal{HS}%
}\right\vert ^{2}\left\vert \lambda\right\vert d\lambda.
\end{align*}
We will now assume that $f$ is defined such that for each $\lambda
\in\mathcal{S},$ the set of systems
\begin{equation}
\left\{  \left\vert \lambda\right\vert ^{1/4}\pi_{\lambda}\left(
I_{1}\right)  u_{\lambda},\left\vert \lambda\right\vert ^{1/4}\overline{\pi
}_{\lambda}\left(  I_{2}\right)  \overline{v}_{\lambda}\right\}
\label{system}%
\end{equation}
is a set of Parseval frames where $\Lambda_{1}=I_{1}\times I_{2}$ and
\begin{align*}
I_{1} &  =\left\{  \left[
\begin{array}
[c]{cccc}%
1 & k_{3} & -k_{2} & 0\\
0 & 1 & 0 & k_{2}\\
0 & 0 & 1 & 0\\
0 & 0 & 0 & 1
\end{array}
\right]  :\left(  k_{2},k_{3}\right)  \in%
\mathbb{Z}
^{2}\right\}  ,\\
I_{2} &  =\left\{  \left[
\begin{array}
[c]{cccc}%
1 & m_{3} & -m_{2} & 0\\
0 & 1 & 0 & m_{2}\\
0 & 0 & 1 & 0\\
0 & 0 & 0 & 1
\end{array}
\right]  :\left(  m_{2},m_{3}\right)  \in%
\mathbb{Z}
^{2}\right\}  .
\end{align*}
We recall that by the density condition (see Lemma \ref{density}), and Lemma
\ref{LEM} the above fact is possible since%
\begin{align*}
\left\vert \lambda\right\vert ^{1/4}\pi_{\lambda}\left(  I_{1}\right)
u_{\lambda} &  =\mathcal{G}\left(  \left\vert \lambda\right\vert
^{1/4}u_{\lambda},%
\mathbb{Z}
\times\lambda%
\mathbb{Z}
\right)  ,\\
\text{ }\left\vert \lambda\right\vert ^{1/4}\pi_{\lambda}\left(  I_{2}\right)
\overline{v}_{\lambda} &  =\mathcal{G}\left(  \left\vert \lambda\right\vert
^{1/4}\overline{v}_{\lambda},%
\mathbb{Z}
\times\lambda%
\mathbb{Z}
\right)  ,
\end{align*}
and $\mathrm{vol}\left(
\mathbb{Z}
\times\lambda%
\mathbb{Z}
\right)  =\left\vert \lambda\right\vert \leq1.$ Next, we would like to prove
that $f$ has finite norm. Applying the Plancherel theorem, we obtain \
\[
\left\Vert f\right\Vert _{L^{2}\left(  \mathbb{H}\right)  }^{2}=\int
_{\mathcal{S}}\left\Vert \mathbf{P}f(\lambda)\right\Vert _{\mathcal{HS}}%
^{2}\left\vert \lambda\right\vert d\lambda=\int_{\mathcal{S}}\left\Vert
u_{\lambda}\right\Vert _{L^{2}\left(
\mathbb{R}
\right)  }^{2}\left\Vert v_{\lambda}\right\Vert _{L^{2}\left(
\mathbb{R}
\right)  }^{2}\left\vert \lambda\right\vert d\lambda.
\]
Since we assume that
\[
\mathcal{G}\left(  \left\vert \lambda\right\vert ^{1/4}u_{\lambda},%
\mathbb{Z}
\times\lambda%
\mathbb{Z}
\right)  ,\text{ and }\mathcal{G}\left(  \left\vert \lambda\right\vert
^{1/4}\overline{v}_{\lambda},%
\mathbb{Z}
\times\lambda%
\mathbb{Z}
\right)
\]
are Parseval frames in $L^{2}\left(
\mathbb{R}
\right)  ,$ by Lemma \ref{ONB copy(1)}
\[
\left\vert \lambda\right\vert ^{1/2}\left\Vert u_{\lambda}\right\Vert
_{L^{2}\left(
\mathbb{R}
\right)  }^{2}=\left\vert \lambda\right\vert ^{1/2}\left\Vert v_{\lambda
}\right\Vert _{L^{2}\left(
\mathbb{R}
\right)  }^{2}=\mathrm{vol}\left(
\mathbb{Z}
\times\lambda%
\mathbb{Z}
\right)  =\left\vert \lambda\right\vert .
\]
Thus,
\[
\left\Vert u_{\lambda}\right\Vert _{L^{2}\left(
\mathbb{R}
\right)  }^{2}=\left\Vert v_{\lambda}\right\Vert _{L^{2}\left(
\mathbb{R}
\right)  }^{2}=\left\vert \lambda\right\vert ^{1/2}%
\]
and
\[
\left\Vert f\right\Vert _{L^{2}\left(  \mathbb{H}\right)  }^{2}\leq\int
_{-1}^{1}\lambda^{2}d\lambda=\frac{2}{3}.
\]
Finally, applying Lemma \ref{tensor},%
\begin{align*}
&
{\displaystyle\sum\limits_{\left(  \gamma,\eta\right)  \in\Lambda}}
\left\vert \left\langle h,\tau\left(  \gamma,\eta\right)  f\right\rangle
\right\vert ^{2}\\
&  =\int_{\mathcal{S}}\overset{\overset{\left\Vert \mathbf{P}h(\lambda
)\right\Vert _{\mathcal{HS}}^{2}}{\shortparallel}}{\overbrace{%
{\displaystyle\sum\limits_{\left(  \kappa,\eta\right)  \in\Lambda_{1}}}
\left\vert \left\langle \mathbf{P}h(\lambda),\pi_{\lambda}\left(
\kappa\right)  \left(  \left\vert \lambda\right\vert ^{1/4}u_{\lambda}\right)
\otimes\overline{\pi}_{\lambda}\left(  \eta\right)  \left(  \left\vert
\lambda\right\vert ^{1/4}v_{\lambda}\right)  \right\rangle _{\mathcal{HS}%
}\right\vert ^{2}}}\left\vert \lambda\right\vert d\lambda\\
&  =\int_{\mathcal{S}}\left\Vert \mathbf{P}h(\lambda)\right\Vert
_{\mathcal{HS}}^{2}\left\vert \lambda\right\vert d\lambda=\left\Vert
h\right\Vert _{L^{2}\left(  \mathbb{H}\right)  }^{2}.
\end{align*}

\subsection{Proof of Theorem \ref{new}}

We recall that
\[
\Lambda_{1}=\left\{  \left(  \left[
\begin{array}
[c]{cccc}%
1 & k_{3} & -k_{2} & 0\\
0 & 1 & 0 & k_{2}\\
0 & 0 & 1 & 0\\
0 & 0 & 0 & 1
\end{array}
\right]  ,\left[
\begin{array}
[c]{cccc}%
1 & m_{3} & -m_{2} & 0\\
0 & 1 & 0 & m_{2}\\
0 & 0 & 1 & 0\\
0 & 0 & 0 & 1
\end{array}
\right]  \right)  :k_{i},m_{j}\in%
\mathbb{Z}
\right\}  .
\]

Let $h\in\mathbf{H}_{\mathcal{S}}.$ Let us suppose that $\tau\left(
\Lambda\right)  f$ is a Parseval frame in $\mathbf{H}_{\mathcal{S}}.$ Then
\[%
{\displaystyle\sum\limits_{\left(  \gamma,\eta\right)  \in\Lambda}}
\left\vert \left\langle h,\tau\left(  \gamma,\eta\right)  f\right\rangle
_{L^{2}\left(  \mathbb{H}\right)  }\right\vert ^{2}=\left\Vert h\right\Vert
_{L^{2}\left(  \mathbb{H}\right)  }^{2}.
\]
Now, using the fact that $\mathcal{S}$ is translation congruent to $\left(
0,1\right]  ,$ it is not too hard to check that
\begin{align*}
&
{\displaystyle\sum\limits_{\left(  \gamma,\eta\right)  \in\Lambda}}
\left\vert \left\langle h,\tau\left(  \gamma,\eta\right)  f\right\rangle
_{L^{2}\left(  \mathbb{H}\right)  }\right\vert ^{2}-\left\Vert h\right\Vert
_{L^{2}\left(  \mathbb{H}\right)  }^{2}\\
&  =%
{\displaystyle\sum\limits_{\left(  \kappa,\eta\right)  \in\Lambda_{1}}}
\int_{\mathcal{S}}\left\vert \left\langle \mathbf{P}h(\lambda),\left[
\pi_{\lambda}\left(  \kappa\right)  \otimes\overline{\pi}_{\lambda}\left(
\eta\right)  \right]  \left(  \mathbf{P}f\right)  (\lambda)\left\vert
\lambda\right\vert \right\rangle _{\mathcal{HS}}\right\vert ^{2}%
d\lambda-\left\Vert h\right\Vert _{L^{2}\left(  \mathbb{H}\right)  }^{2}\\
&  =\int_{\mathcal{S}}%
{\displaystyle\sum\limits_{\left(  \kappa,\eta\right)  \in\Lambda_{1}}}
\left\vert \left\langle \mathbf{P}h(\lambda),\left[  \pi_{\lambda}\left(
\kappa\right)  \otimes\overline{\pi}_{\lambda}\left(  \eta\right)  \right]
\left(  \mathbf{P}f\right)  (\lambda)\left\vert \lambda\right\vert
\right\rangle _{\mathcal{HS}}\right\vert ^{2}d\lambda-\int_{\mathcal{S}%
}\left\Vert \left(  \mathbf{P}h\right)  (\lambda)\right\Vert _{\mathcal{HS}%
}^{2}\left\vert \lambda\right\vert d\lambda\\
&  =\int_{\mathcal{S}}\left(
{\displaystyle\sum\limits_{\left(  \kappa,\eta\right)  \in\Lambda_{1}}}
\left\vert \left\langle \mathbf{P}h(\lambda),\left[  \pi_{\lambda}\left(
\kappa\right)  \otimes\overline{\pi}_{\lambda}\left(  \eta\right)  \right]
\left(  \mathbf{P}f\right)  (\lambda)\left\vert \lambda\right\vert
\right\rangle _{\mathcal{HS}}\right\vert ^{2}-\left\Vert \left(
\mathbf{P}h\right)  (\lambda)\right\Vert _{\mathcal{HS}}^{2}\left\vert
\lambda\right\vert \right)  d\lambda\\
&  =0.
\end{align*}
Next, replacing $h$ with $g$ such that $\mathbf{P}g(\lambda)=\chi_{B}\left(
\lambda\right)  \mathbf{P}h(\lambda)$ where $B$ is any Borel subset, then
\begin{equation}%
{\displaystyle\sum\limits_{\left(  \kappa,\eta\right)  \in\Lambda_{1}}}
\left\vert \left\langle \mathbf{P}h(\lambda),\left[  \pi_{\lambda}\left(
\kappa\right)  \otimes\overline{\pi}_{\lambda}\left(  \eta\right)  \right]
\left(  \mathbf{P}f\right)  (\lambda)\left\vert \lambda\right\vert
\right\rangle _{\mathcal{HS}}\right\vert ^{2}-\left\Vert \left(
\mathbf{P}h\right)  (\lambda)\right\Vert _{\mathcal{HS}}^{2}\left\vert
\lambda\right\vert =0\label{relation}%
\end{equation}
for every $h$ and for $d\lambda$-almost every $\lambda\in\mathcal{S}$. For
every $h$, there exists a null set $N_{h}$ such that for every $\lambda
\in\mathcal{S-}N_{h},$ (\ref{relation}) holds. However, we must show that
(\ref{relation}) holds for all $h$ and for all $\lambda$ in a conull subset
which does not depend on $h.$ In order to do so, we pick a countable dense
subset $\mathbb{A}$ of $L^{2}\left(  \mathbb{H}\right)  $ such that $\left\{
h\left(  \lambda\right)  :h\in\mathbb{A}\right\}  $ is dense in $L^{2}\left(
\mathbb{R}
\right)  \otimes L^{2}\left(
\mathbb{R}
\right)  .$ Then for
\[
\lambda\in\mathcal{S-}\left(
{\displaystyle\bigcup\limits_{h\in\mathbb{A}}}
\left(  N_{h}\right)  \right)
\]
and for $m\in\mathbb{A}$ we have that
\[%
{\displaystyle\sum\limits_{\left(  \kappa,\eta\right)  \in\Lambda_{1}}}
\left\vert \left\langle \mathbf{P}m(\lambda),\left[  \pi_{\lambda}\left(
\kappa\right)  \otimes\overline{\pi}_{\lambda}\left(  \eta\right)  \right]
\left(  \mathbf{P}f\right)  (\lambda)\left\vert \lambda\right\vert
^{1/2}\right\rangle _{\mathcal{HS}}\right\vert ^{2}=\left\Vert \left(
\mathbf{P}m\right)  (\lambda)\right\Vert _{\mathcal{HS}}^{2}.
\]
Since $\left\{  h\left(  \lambda\right)  :h\in\mathbb{A}\right\}  $ is dense
in $L^{2}\left(
\mathbb{R}
\right)  \otimes L^{2}\left(
\mathbb{R}
\right)  $ then%
\[%
{\displaystyle\sum\limits_{\left(  \kappa,\eta\right)  \in\Lambda_{1}}}
\left\vert \left\langle T,\left[  \pi_{\lambda}\left(  \kappa\right)
\otimes\overline{\pi}_{\lambda}\left(  \eta\right)  \right]  \left(
\mathbf{P}f\right)  (\lambda)\left\vert \lambda\right\vert ^{1/2}\right\rangle
_{\mathcal{HS}}\right\vert ^{2}=\left\Vert T\right\Vert _{\mathcal{HS}}^{2}%
\]
for $d\lambda$-almost every $\lambda\in\mathcal{S}$ and for every $T\in
L^{2}\left(
\mathbb{R}
\right)  \otimes L^{2}\left(
\mathbb{R}
\right)  .$

\subsection{Proof of Theorem \ref{last}}

Let $f\in\mathbf{H}_{\mathcal{S}}$ be defined such that $\mathbf{P}f=\left(
u_{\lambda}\otimes v_{\lambda}\right)  _{\lambda\in\mathcal{S}}\ $such that
$\mathcal{G}\left(  \left\vert \lambda\right\vert ^{1/4}u_{\lambda},%
\mathbb{Z}
\times\lambda%
\mathbb{Z}
\right)  $ is a Parseval Gabor frame for $d\lambda$-almost every $\lambda
\in\mathcal{S}$, and $\mathcal{G}\left(  \left\vert \lambda\right\vert
^{1/4}\overline{v}_{\lambda},%
\mathbb{Z}
\times\lambda%
\mathbb{Z}
\right)  $ is a Parseval Gabor frame for $\lambda\in\mathcal{S}$, so that (due
to Theorem \ref{main}) the system $\tau\left(  \Lambda\right)  f$ is a
Parseval frame in $\mathbf{H}_{\mathcal{S}}\mathbf{.}$ Let us define a unitary
representation of $H$ by $C:H\rightarrow\mathcal{U}\left(  L^{2}\left(
\mathbb{R}
\right)  \right)  $ such that for $\phi\in L^{2}\left(
\mathbb{R}
\right)  $%
\[
C\left(  A\right)  \phi\left(  t\right)  =\left\vert a\right\vert ^{-1/2}%
\phi\left(  a^{-1}t\right)  .
\]
Notice that $C\left(  A\right)  $ is just a unitary operator acting on
$L^{2}\left(
\mathbb{R}
\right)  $ by dilation. Let $h$ be an arbitrary element of the Hilbert space
$\mathbf{H}_{\mathcal{S}}.$ It is fairly easy to see that
\[
\mathbf{P}\left(  D_{A^{m}}h\right)  \left(  \lambda\right)  =\left\vert \det
A\right\vert ^{m/2}C\left(  A^{m}\right)  \circ\mathbf{P}h\left(  2^{m}%
\lambda\right)  \circ C\left(  A^{m}\right)  ^{-1}.
\]
To see that this holds, it suffices to perform the following computations.
Given arbitrary $u,v\in L^{2}\left(
\mathbb{R}
\right)  $ we have
\begin{align*}
\left\langle \mathbf{P}\left(  D_{A}h\right)  \left(  \lambda\right)
u,v\right\rangle  &  =\int_{\mathbb{H}}D_{A}h\left(  r\right)  \left\langle
\pi_{\lambda}\left(  r\right)  u,v\right\rangle dr\\
&  =\int_{\mathbb{H}}\left\vert \det A\right\vert ^{1/2}h\left(  r\right)
\left\langle \pi_{\lambda}\left(  ArA^{-1}\right)  u,v\right\rangle dr\\
&  =\left\vert \det A\right\vert ^{1/2}\int_{\mathbb{H}}h\left(  r\right)
\left\langle C\left(  A\right)  \pi_{2\lambda}\left(  r\right)  C\left(
A\right)  ^{-1}u,v\right\rangle dr.
\end{align*}
The last equality above is justified because for any $u\in L^{2}\left(
\mathbb{R}
\right)  ,$%
\begin{align*}
\pi_{2\lambda}\left(  x,y,z\right)  \left[  C\left(  A\right)  ^{-1}u\right]
\left(  t\right)   &  =e^{2\pi i\left(  2\lambda\right)  z}e^{-2\pi i2\lambda
yt}\left[  C\left(  A\right)  ^{-1}u\right]  \left(  t-x\right) \\
&  =\left\vert a\right\vert ^{1/2}e^{2\pi i\left(  2\lambda\right)  z}e^{-2\pi
i2\lambda yt}u\left(  a\left(  t-x\right)  \right) \\
&  =\left\vert a\right\vert ^{1/2}e^{2\pi i\lambda\left(  2z\right)  }e^{-2\pi
i\lambda ybat}u\left(  at-ax\right) \\
&  =\left\vert a\right\vert ^{1/2}\left[  \pi_{\lambda}\left(  ArA^{-1}%
\right)  u\right]  \left(  at\right) \\
&  =C\left(  A\right)  ^{-1}\left[  \pi_{\lambda}\left(  A\left(
x,y,z\right)  A^{-1}\right)  u\right]  \left(  t\right)  .
\end{align*}
So,
\[
\mathbf{P}\left(  D_{A^{m}}h\right)  \left(  \lambda\right)  =\left\vert \det
A\right\vert ^{m/2}C\left(  A^{m}\right)  \mathbf{P}h\left(  2^{m}%
\lambda\right)  C\left(  A^{m}\right)  ^{-1}.
\]
Assuming that $\mathbf{P}h\left(  \lambda\right)  =u_{\lambda}\otimes
v_{\lambda}$ is a rank-one operator, we check that
\begin{align*}
\mathbf{P}\left(  D_{A^{m}}h\right)  \left(  \lambda\right)  w  &  =\left\vert
\det A\right\vert ^{m/2}C\left(  A^{m}\right)  \left(  u_{2^{m}\lambda}\otimes
v_{2^{m}\lambda}\right)  C\left(  A^{m}\right)  ^{-1}w\\
&  =\left\vert \det A\right\vert ^{m/2}\left\langle C\left(  A^{m}\right)
^{-1}w,v_{2^{m}\lambda}\right\rangle C\left(  A^{m}\right)  u_{2^{m}\lambda}\\
&  =\left\vert \det A\right\vert ^{m/2}\left\langle w,C\left(  A^{m}\right)
v_{2^{m}\lambda}\right\rangle C\left(  A^{m}\right)  u_{2^{m}\lambda}\\
&  =\left\vert \det A\right\vert ^{m/2}\left(  C\left(  A^{m}\right)
u_{2^{m}\lambda}\otimes C\left(  A^{m}\right)  v_{2^{m}\lambda}\right)  w.
\end{align*}
As a result,
\[
\mathbf{P}\left(  D_{A^{m}}h\right)  \left(  \lambda\right)  =\left\vert \det
A\right\vert ^{m/2}\left(  C\left(  A\right)  u_{2^{m}\lambda}\otimes C\left(
A\right)  v_{2^{m}\lambda}\right)  .
\]
Thus, $D_{A^{m}}\left(  \mathbf{H}_{\mathcal{S}}\right)  =\mathbf{H}%
_{2^{-m}\left(  \mathcal{S}\right)  }$ and for $m\neq k,$ $\mathbf{H}%
_{2^{-m}\left(  \mathcal{S}\right)  }$ is orthogonal to $\mathbf{H}%
_{2^{-k}\left(  \mathcal{S}\right)  }.$ Also, since the system $\tau\left(
\Lambda\right)  f$ is a Parseval frame (see Theorem \ref{main}) for
$\mathbf{H}_{\mathcal{S}}$ and since $D_{A^{m}}$ is unitary, then
\[
D_{A^{m}}\left(  \tau\left(  \Lambda\right)  f\right)  =\tau\left(
A^{m}\left(  \Lambda\right)  \right)  D_{A^{m}}f
\]
is a Parseval frame in the Hilbert space $\mathbf{H}_{2^{-m}\left(
\mathcal{S}\right)  }.$ Finally, since $\mathcal{S}$ is dilation congruent to
the set $\left[  -1,-1/2\right)  \cup\left(  1/2,1\right]  $ then $%
{\displaystyle\bigcup\limits_{m\in\mathbb{Z}}}
\mathbf{H}_{2^{-m}\left(  \mathcal{S}\right)  }$ is dense in $L^{2}\left(
\mathbb{H}\right)  $ and
\[
\overline{%
{\displaystyle\bigcup\limits_{m\in\mathbb{Z}}}
\mathbf{H}_{2^{-m}\left(  \mathcal{S}\right)  }}=L^{2}\left(  \mathbb{H}%
\right)  .
\]
We conclude that the given system
\[
\left\{  D_{A^{m}}\tau\left(  \gamma,\eta\right)  f:m\in%
\mathbb{Z}
,\left(  \gamma,\eta\right)  \in\Lambda\right\}
\]
is a Parseval frame for $L^{2}\left(  \mathbb{H}\right)  $.

\begin{acknowledgement}
I thank Hartmut F\"{u}hr for interesting conversations during the CMS meeting
in Halifax, NS Canada.
\end{acknowledgement}

\end{document}